\newtheorem{theorem}{Theorem}[section]
\newtheorem{lemma}{Lemma}[section]
\newtheorem{remark}{Remark}[section]
\newtheorem{example}{Example}[section]
\begin{document}
\title{Residual-type a posteriori error analysis of HDG methods for Neumann boundary control problems}
\thanks{This work is supported by State Key Program of National Science Foundation of China (11931003)
and National Nature Science Foundation of China (41974133, 11671157).}
%
\author{Haitao Leng}\address{School of Mathematical Sciences, South China Normal University, Guangzhou 510631, Guangdong, China.}

\author{Yanping Chen}\address{School of Mathematical Sciences, South China Normal University, Guangzhou 510631, Guangdong, China.}
%
\date{...}
\begin{abstract}
 We study a posteriori error analysis of
linear-quadratic boundary control problems under bilateral box constraints on the control which acts
through a Neumann type boundary condition.
We adopt the hybridizable discontinuous Galerkin method as discretization technique, and
the flux variables, the scalar variables and the boundary trace variables are all approximated by polynomials of degree k.
As for the control variable, it is discretized by the variational discretization concept.
Then an efficient and reliable a posteriori error estimator is introduced, and we prove that the error estimator provides
an upper bound and a lower bound for the error. Finally,
numerical results are presented to illustrate the performance of the obtained a posteriori error estimator.
\end{abstract}
%
%
\subjclass{49M25, 65K10, 65M50}
\keywords{boundary control problem, a posteriori error analysis, HDG, adaptive method.}
\maketitle
\section{Introduction}
\label{intro}
Many optimization processes in science and engineering lead to optimal control problems where the sought state is a solution of
a partial differential equation. The complexity of such problem needs special care in order to obtain efficient numerical approximations
for the optimization problem. One particular method is adaptive finite element method, which can reduces the computational cost
and boosts the accuracy of the numerical solutions by locally refining the meshes around the singularity.

Although the adaptive finite element method has become a popular approach for numerical solutions of partial differential equations since
the work of Babu\v{s}ka and Rheinboldt \cite{br1978}, it has only quiet recently become popular for constrained optimal control problems.
The pioneer work concerning a posteriori error analysis for distributed optimal control problems is published by Liu and Yan \cite{ly2001}
for residual-type error estimators and Becker, Kapp, and Rannacher \cite{bkr2000} for goal-oriented error estimators.
Here, we further refer readers to \cite{hhik2008, llmt2002, yb2015, yk2014, zyy2014} for residual-type estimators and \cite{bv2009, hh2010}
for goal-oriented approach. Recently, in order to guarantee the performance of the a posteriori error estimator theoretically, many scholars
have tried
to prove the convergence of an adaptive finite element algorithm for distributed optimal control problems in \cite{gikh2007, gy2017, ksr2014, lc2018}.

Compared to distributed optimal control problems, there exists limited work on a posteriori error analysis for boundary optimal control problems.
In \cite{ly2001a}, the convex Neumann boundary control problem was considered on polygonal or Lipschitz piecewise $\mathcal{C}^2$ domain. Then
a residual-type a posteriori error estimator was introduced, and the authors proved that the estimator provided an upper bound for the errors in
the state and the control. In \cite{hiis2006}, by introducing a Lagrange multiplier, the authors derived
an efficient and reliable residual-type a posteriori error estimator for Neumann boundary control problems on polygonal domain.
In \cite{krs2014}, Kohls, R\"{o}sch and Siebert derived a unifying framework for the a posteriori error analysis of control constrained
linear-quadratic optimal control problems for the full and variational discretizations.
In \cite{by2017}, Benner and Y\"{u}cel investigated symmetric interior penalty Galerkin methods for Neumann boundary control problems with
an extra coefficient in cost functional. By invoking a Lagrange multiplier associated with the control constraints, an efficient and reliable
residual-type a posteriori error estimator was obtained for the errors in the state, adjoint, control and co-control. As for Dirichlet boundary
control problems, we just mention \cite{cgn2017, glty2018} and references therein for more details on a posteriori error analysis.

Recently, the hybridizable discontinuous Galerkin (HDG) methods \cite{cgl2009}, which keep the advantages of discontinuous Galerkin (DG) methods
and result in a system with significantly reduced degrees of freedom, have been proposed for convection diffusion problem \cite{fqz2015}, interface
problem \cite{cc2019}, flow problem\cite{npc2010}, optimal control problem \cite{chsszz2018, ghmszz2018}, and so on.
In \cite{cz2012, cz2013, cz2013a}, Cockburn and Zhang studied HDG methods for second
order elliptic problems, and an a posteriori error estimator with postprocessing solutions was obtained.
To the best of our knowledge, there exists no work on residual-type a posteriori error analysis of HDG methods for boundary control problems.

In this paper, we investigate a posteriori error analysis of Neumann optimal control problems under bilateral box constraints on the control.
The HDG method is used as discretization technique, and the flux variables, the scalar variables and the boundary trace variables are discretized
by polynomials of degree $k$. As for the control variable, we adopt the variational discretization concept proposed by Hinze in \cite{h2005} for
approximation. Then an efficient and reliable residual-type a posteriori error estimator without any postprocessing solutions is introduced,
and we prove that the error estimator provides not only an upper bound but also a lower bound up to data oscillations for the errors.
Finally, numerical experiments are presented to validate the performance of the obtained estimator.

The remainder of the paper is arranged as follows: In Section \ref{sec2} we introduce the model problem and the associated optimality system.
In Section \ref{sec3} the discrete optimality system is given, and we prove that the discrete scheme has a unique solution.
Then we prove the reliability and efficiency of the error estimator in Section \ref{sec4} and Section \ref{sec5} respectively. Numerical
experiments are presented in Section \ref{sec6} to validate the performance of the obtained estimator. Finally, some conclusions are provided
in Section \ref{sec7}.

Throughout this paper, let $C$ with or without subscript be a generic positive constant independent of the mesh size. For ease of exposition,
we denote $A\leq CB$ by $A\lesssim B$.

\section{The Neumann boundary control problem}
\label{sec2}
Let $\Omega\in\mathbb{R}^d~(d=2,3)$ be a polygonal $(d=2)$ or polyhedral $(d=3)$ domain with boundary $\partial\Omega$.
Before we introduce the model problem, let us summarize some notation.
For bounded and open set $D\in\mathbb{R}^d$ or $D\in\mathbb{R}^{d-1}$, we denote the usual Sobolev spaces by $W^{s,p}(D)$
with norm $\|\cdot\|_{s,p,D}$ and seminorm $|\cdot|_{s,p,D}$. The Hilbertian Sobolev spaces are abbreviated by
$H^s(D)=W^{s,2}(D)$ with norm $\|\cdot\|_{s,D}$ and seminorm $|\cdot|_{s,D}$. For $s=0$, $H^0(D)$ coincides with $L^2(D)$,
and the inner product is denoted by $(\cdot,\cdot)_{D}$ for $D\in \mathbb{R}^d$ and $\langle\cdot,\cdot\rangle_D$ for
$D\in\mathbb{R}^{d-1}$. Furthermore, we define $H(div,\Omega):=\{\textbf{v}\in (L^2(\Omega))^d:\nabla\cdot\textbf{v}\in L^2(\Omega)\}$.

Based on the domain $\Omega$, we consider the following Neumann boundary control problem
\begin{equation}\label{cost}
\min_{y\in H^1(\Omega),u\in U_{ad}}\mathcal{J}(y,u)=\frac{1}{2}\|y-y_d\|_{0,\Omega}^2+\frac{\alpha}{2}\|u\|_{0,\partial\Omega}^2,
\end{equation}
subject to the elliptic equations
\begin{subequations}\label{state}
\begin{align}
-\Delta y+y &=f\quad \rm{in}~\Omega,\label{state:1}\\
\nabla y\cdot\textbf{n}&=u+g\quad \rm{on}~\partial\Omega,\label{state:2}
\end{align}
\end{subequations}
where the regularization parameter $\alpha$ is a positive constant, $y_d\in L^2(\Omega)$, $f\in L^2(\Omega)$, $g\in L^2(\partial\Omega)$,
$\textbf{n}$ is the unit vector normal to the boundary $\partial\Omega$. The set $U_{ad}$ of
constraints is given by
\begin{equation*}
U_{ad}=\{v\in L^2(\partial\Omega): u_a\leq v\leq u_b~a.e.~x\in\partial\Omega\},
\end{equation*}
where $u_a$ and $u_b$ are assumed to be constant, and that $u_a<u_b$.

From \cite{l1971}, we know that the Neumann boundary control problem (\ref{cost})-(\ref{state}) admits a unique solution $(y,u)\in H^1(\Omega)\times
L^2(\partial\Omega)$, and there exists an adjoint-state $z\in H^1(\Omega)$ such that
\begin{subequations}\label{optimality}
\begin{align}
-\Delta y+ y&=f\quad \rm{in}~\Omega,\label{optimality:1}\\
\nabla y\cdot\textbf{n}&=u+g\quad \rm{on}~\partial\Omega,\label{optimality:2}\\
-\Delta z+z&=y-y_d\quad \rm{in}~\Omega,\label{optimality:3}\\
\nabla z\cdot\textbf{n}&=0\quad \rm{on}~\partial\Omega,\label{optimality:4}\\
\langle\alpha u+z,v-u\rangle_{\partial\Omega}&\geq 0\quad \forall v\in U_{ad}.\label{optimality:5}
\end{align}
\end{subequations}
Moreover, the variational inequality (\ref{optimality:5}) is equivalent to the projection formula
\begin{equation}
u=\Pi_{U_{ad}}\Big(-\frac{1}{\alpha}z|_{\partial\Omega}\Big),\label{equ}
\end{equation}
where $\Pi_{U_{ad}}$ is the $L^2$-projection onto $U_{ad}$. Then let $\textbf{p}=-\nabla y$ and $\textbf{q}=-\nabla z$,
the optimality system (\ref{optimality}) can be rewritten in a mixed form as follows:
\begin{subequations}\label{mixed}
\begin{align}
\textbf{p}+\nabla y&=0\quad \rm{in}~\Omega,\label{mixed:1}\\
\nabla\cdot\textbf{p}+y&=f\quad \rm{in}~\Omega,\label{mixed:2}\\
-\textbf{p}\cdot\textbf{n}&=u+g\quad \rm{on}~\partial\Omega,\label{mixed:3}\\
\textbf{q}+\nabla z&=0\quad \rm{in}~\Omega,\label{mixed:4}\\
\nabla\cdot\textbf{q}+z&=y-y_d\quad \rm{in}~\Omega,\label{mixed:5}\\
-\textbf{q}\cdot\textbf{n}&=0\quad \rm{on}~\partial\Omega,\label{mixed:6}\\
\langle\alpha u+z,v-u\rangle_{\partial\Omega}&\geq 0\quad \forall v\in U_{ad}.\label{mixed:7}
\end{align}
\end{subequations}
\section{The HDG discretization}
\label{sec3}
Let $\mathcal{T}_h$ be a conforming and shape regular partition of the domain $\Omega$. For each $K\in\mathcal{T}_h$, we denote $\partial K$ the set of its faces. Then we define $\partial\mathcal{T}_h=\{\partial K: K\in\mathcal{T}_h\}$. Denote $\mathcal{E}_h^o$ the set of all interior faces of $\mathcal{T}_h$
and $\mathcal{E}_h^{\partial}$ the set of all boundary faces of $\mathcal{T}_h$. Then we define $\mathcal{E}_h=\mathcal{E}_h^o\cup\mathcal{E}_h^{\partial}$.
For any $K\in\mathcal{T}_h$ and $F\in\mathcal{E}_h$, $h_K$ and $h_E$ denote the diameters of the element $K$ and the face $F$ respectively.
Furthermore, we define the mesh-dependent inner product by
\begin{equation*}
(w,v)_{\mathcal{T}_h}=\sum_{K\in\mathcal{T}_h}(w,v)_K,\quad \langle w,v\rangle_{\partial\mathcal{T}_h}=\sum_{K\in\mathcal{T}_h}\langle w,v\rangle_{\partial K}.
\end{equation*}
For vector-valued functions, the notations are similarly defined by the dot product.

Based on the partition $\mathcal{T}_h$, we define the discontinuous finite element spaces for the flux variables, the scalar variables and the boundary trace
variables as following
\begin{align*}
&\textbf{V}_h^k=\{\textbf{v}\in(L^2(\Omega))^d:\textbf{v}|_K\in(\mathcal{P}^k(K))^d,~\forall K\in\mathcal{T}_h\},\\
&W_h^k=\{w\in L^2(\Omega):w|_K\in\mathcal{P}^k(K),~\forall K\in\mathcal{T}_h\},\\
& M_h^k=\{\mu\in L^2(\mathcal{E}_h):\mu|_F\in\mathcal{P}^k(F),~\forall F\in\mathcal{E}_h\},
\end{align*}
where $\mathcal{P}^k(S)$ is the set of polynomials of degree no larger than $k$ on the domain $S$. In this paper, we adopt the variational concept
proposed by Hinze \cite{h2005} for the control variable, which suggests to approximate the state equation but not the control variable.
Therefore the control variable will be implicitly discretized by formula (\ref{equ}). Then the HDG scheme of the system (\ref{mixed}) reads as follows: Find
$(\textbf{p}_h,y_h,\widehat{y}_h)\in \textbf{V}_h^k\times W_h^k\times M_h^k$, $(\textbf{q}_h,z_h,\widehat{z}_h)\in
\textbf{V}_h^k\times W_h^k\times M_h^k$ and $u_h\in U_{ad}$ such that
\begin{subequations}\label{HDG}
\begin{align}
(\textbf{p}_h,\textbf{r}_1)_{\mathcal{T}_h}-(y_h,\nabla\cdot\textbf{r}_1)_{\mathcal{T}_h}+\langle\widehat{y}_h,\textbf{r}_1\cdot\textbf{n}\rangle_
{\partial\mathcal{T}_h}&=0,\label{HDG:1}\\
-(\textbf{p}_h,\nabla w_1)_{\mathcal{T}_h}+(y_h,w_1)_{\mathcal{T}_h}
+\langle \widehat{\textbf{p}}_h\cdot\textbf{n},w_1\rangle_{\partial\mathcal{T}_h}&=(f,w_1)_{\mathcal{T}_h},\label{HDG:2}\\
\langle\widehat{\textbf{p}}_h\cdot\textbf{n},\mu_1\rangle_{\partial\mathcal{T}_h\backslash\partial\Omega}&=0,\label{HDG:3}\\
-\langle\widehat{\textbf{p}}_h\cdot\textbf{n},\mu_1\rangle_{\partial\Omega}&=\langle u_h+g,\mu_1\rangle_{\partial\Omega},\label{HDG:4}\\
(\textbf{q}_h,\textbf{r}_2)_{\mathcal{T}_h}-(z_h,\nabla\cdot\textbf{r}_2)_{\mathcal{T}_h}+\langle\widehat{z}_h,\textbf{r}_2\cdot
\textbf{n}\rangle_{\partial\mathcal{T}_h}&=0,\label{HDG:5}\\
-(\textbf{q}_h,\nabla w_2)_{\mathcal{T}_h}+(z_h,w_2)_{\mathcal{T}_h}+\langle\widehat{\textbf{q}}_h\cdot\textbf{n},w_2\rangle
_{\partial\mathcal{T}_h}&=(y_h-y_d,w_2)_{\mathcal{T}_h},\label{HDG:6}\\
\langle\widehat{\textbf{q}}_h\cdot\textbf{n},\mu_2\rangle_{\partial\mathcal{T}_h\backslash\partial\Omega}&=0,\label{HDG:7}\\
-\langle\widehat{\textbf{q}}_h\cdot\textbf{n},\mu_2\rangle_{\partial\Omega}&=0,\label{HDG:8}\\
\langle\alpha u_h+\widehat{z}_h,v-u_h\rangle_{\partial\Omega}&\geq 0,\label{HDG:9}
\end{align}
\end{subequations}
for any $(\textbf{r}_1,w_1,\mu_1)\in\textbf{V}_h^k\times W_h^k\times M_h^k$, $(\textbf{r}_2,w_2,\mu_2)\in\textbf{V}_h^k\times W_h^k\times M_h^k$
and $v\in U_{ad}$. Similarly, we know that
the inequality (\ref{HDG:9})
 is equivalent to the following projection formula
\begin{equation*}
u_h=\Pi_{U_{ad}}\Big(-\frac{1}{\alpha}\widehat{z}_h|_{\partial\Omega}\Big).
\end{equation*}
Here the normal component of numerical fluxes $\widehat{\textbf{p}}_h\cdot\textbf{n}$ and
$\widehat{\textbf{q}}_h\cdot\textbf{n}$ is defined as
\begin{align*}
&\widehat{\textbf{p}}_h\cdot\textbf{n}=\textbf{p}\cdot\textbf{n}+\tau_1(y_h-\widehat{y}_h)\quad \rm{on}~\partial\mathcal{T}_h,\\
&\widehat{\textbf{q}}_h\cdot\textbf{n}=\textbf{q}\cdot\textbf{n}+\tau_2(z_h-\widehat{z}_h)\quad \rm{on}~\partial\mathcal{T}_h,
\end{align*}
for stabilization parameters $\tau_1$ and $\tau_2$.

For ease of exposition, we define operators $\mathcal{B}$ by
\begin{align*}
&\mathcal{B}(\textbf{r}_1,w_1,\mu_1;\textbf{r}_2,w_2,\mu_2;\tau)\\
=&(\textbf{r}_1,\textbf{r}_2)_{\mathcal{T}_h}-(w_1,\nabla\cdot\textbf{r}_2)_{\mathcal{T}_h}+\langle\mu_1,\textbf{r}_2\cdot\textbf{n}\rangle
_{\partial\mathcal{T}_h}\\
&+(\nabla\cdot\textbf{r}_1,w_2)_{\mathcal{T}_h}+(w_1,w_2)_{\mathcal{T}_h}\\
&+\langle\tau(w_1-\mu_1),w_2\rangle_{\partial\mathcal{T}_h}-\langle\textbf{r}_1\cdot\textbf{n}+\tau(w_1-\mu_1),\mu_2\rangle_{\partial\mathcal{T}_h},
\end{align*}
Then the HDG scheme (\ref{HDG}) can be rewritten according to the operator $\mathcal{B}$: Find
$(\textbf{p}_h,y_h,\widehat{y}_h)\in\textbf{V}_h^k\times W_h^k\times M_h^k$, $(\textbf{q}_h,z_h,\widehat{z}_h)\in
\textbf{V}_h^k\times W_h^k\times M_h^k$ and $u_h\in U_{ad}$ such that
\begin{subequations}\label{bbs}
\begin{align}
\mathcal{B}(\textbf{p}_h,y_h,\widehat{y}_h;\textbf{r}_1,w_1,\mu_1;\tau_1)&=(f,w_1)_{\mathcal{T}_h}+\langle u_h+g,\mu_1\rangle_{\partial\Omega},\label{bbs:1}\\
\mathcal{B}(\textbf{q}_h,z_h,\widehat{z}_h;\textbf{r}_2,w_2,\mu_2;\tau_2)&=(y_h-y_d,w_2)_{\mathcal{T}_h},\label{bbs:2}\\
\langle\alpha u_h+\widehat{z}_h,v-u_h\rangle_{\partial\Omega}&\geq 0,\label{bbs:3}
\end{align}
\end{subequations}
for any $(\textbf{r}_1,w_1,\mu_1)\in\textbf{V}_h^k\times W_h^k\times M_h^k$, $(\textbf{r}_2,w_2,\mu_2)\in\textbf{V}_h^k\times W_h^k\times M_h^k$
and $v\in U_{ad}$.
\begin{theorem}
We assume that $\tau_1=\tau_2>0$ on $\partial \mathcal{T}_h$ and $0\in U_{ad}$. Then the system (\ref{bbs}) has a unique solution.
\end{theorem}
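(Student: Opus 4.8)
The plan is to separate the genuinely linear part of the system (the two HDG solvers) from the nonlinearity carried by the control, and to glue them together with a duality identity. First I would record the coercivity identity obtained by taking the test triple equal to the trial triple,
\[
\mathcal{B}(\textbf{r},w,\mu;\textbf{r},w,\mu;\tau)=\|\textbf{r}\|_{0,\mathcal{T}_h}^2+\|w\|_{0,\mathcal{T}_h}^2+\langle\tau(w-\mu),w-\mu\rangle_{\partial\mathcal{T}_h},
\]
in which the mixed terms $-(w,\nabla\cdot\textbf{r})+(\nabla\cdot\textbf{r},w)$ and the trace terms $\langle\mu,\textbf{r}\cdot\textbf{n}\rangle-\langle\textbf{r}\cdot\textbf{n},\mu\rangle$ cancel in pairs. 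Since $\tau_1=\tau_2>0$, the right-hand side is nonnegative and vanishes only when $\textbf{r}=0$, $w=0$, and $\mu=w=0$ on $\partial\mathcal{T}_h$. Because (\ref{bbs:1}) and (\ref{bbs:2}) are square linear systems for fixed data, this injectivity shows that for any given $u_h$ the state triple $(\textbf{p}_h,y_h,\widehat{y}_h)$ is uniquely determined and, for any given $y_h$, the adjoint triple $(\textbf{q}_h,z_h,\widehat{z}_h)$ is uniquely determined; in particular the control-to-state map $u_h\mapsto y_h$ is affine.

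For existence I would use this affine solver to form the reduced functional $j_h(u_h)=\tfrac12\|y_h(u_h)-y_d\|_{0,\Omega}^2+\tfrac{\alpha}{2}\|u_h\|_{0,\partial\Omega}^2$ on the set $U_{ad}\subset L^2(\partial\Omega)$, which is nonempty since $0\in U_{ad}$, as well as closed and convex. Because $\alpha>0$ and $y_h(\cdot)$ is affine, $j_h$ is continuous, strictly convex and coercive, so it has a unique minimizer $u_h$. Defining $(\textbf{q}_h,z_h,\widehat{z}_h)$ as the adjoint solution of (\ref{bbs:2}) associated with $y_h(u_h)$, the first-order condition $\langle j_h'(u_h),v-u_h\rangle\ge0$ reproduces (\ref{bbs:3}) once one verifies the reduced gradient acts as $\langle j_h'(u_h),v\rangle=\langle\alpha u_h+\widehat{z}_h,v\rangle_{\partial\Omega}$; hence a full solution of (\ref{bbs:1})--(\ref{bbs:3}) exists.

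For uniqueness I would take two solutions, subtract, and denote the differences by $\delta(\cdot)$. Testing the two copies of (\ref{bbs:3}) with $v=u_h^2$ and $v=u_h^1$ and adding yields the monotonicity estimate $\alpha\|\delta u_h\|_{0,\partial\Omega}^2+\langle\delta\widehat{z}_h,\delta u_h\rangle_{\partial\Omega}\le0$. The decisive step is then the duality identity
\[
\langle\delta u_h,\delta\widehat{z}_h\rangle_{\partial\Omega}=\|\delta y_h\|_{0,\mathcal{T}_h}^2,
\]
which I would obtain by testing the state-difference equations with $(\textbf{q}_h,z_h,\widehat{z}_h)$ and the adjoint-difference equations with $(\textbf{p}_h,y_h,\widehat{y}_h)$, integrating by parts elementwise, and using $\tau_1=\tau_2$ so that the volume, numerical-flux and stabilization contributions recombine. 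Inserting this identity into the monotonicity estimate forces $\delta u_h=0$ and $\delta y_h=0$; the energy identity of the first step then propagates $\delta\textbf{p}_h=\delta\widehat{y}_h=0$ and, through the adjoint solver, $\delta\textbf{q}_h=\delta z_h=\delta\widehat{z}_h=0$.

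The main obstacle is precisely this duality identity. The form $\mathcal{B}$ is \emph{not} symmetric on the full space $\textbf{V}_h^k\times W_h^k\times M_h^k$, so one cannot simply swap the two arguments; instead one must exploit the structure built into the discrete solutions, namely the single-valuedness of the numerical traces and the flux-conservation conditions (\ref{HDG:3}), (\ref{HDG:4}), (\ref{HDG:7}), (\ref{HDG:8}). These give $\langle\widehat{\textbf{q}}_h\cdot\textbf{n},\delta\widehat{y}_h\rangle_{\partial\mathcal{T}_h}=0$ together with the analogous state relation $\langle\widehat{\textbf{p}}_h\cdot\textbf{n},\delta\widehat{z}_h\rangle_{\partial\mathcal{T}_h}=\langle\delta u_h,\delta\widehat{z}_h\rangle_{\partial\Omega}$, and it is exactly these cancellations, rather than any generic bilinear-form symmetry, that produce the clean right-hand side $\|\delta y_h\|_{0,\mathcal{T}_h}^2$ and thereby close both the existence and the uniqueness arguments.
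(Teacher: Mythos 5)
Your proposal is correct, and its computational core coincides exactly with the paper's: your coercivity identity is the identity the paper uses in its second step (testing each equation with its own trial triple), and your duality identity $\langle\delta u_h,\delta\widehat{z}_h\rangle_{\partial\Omega}=\|\delta y_h\|_{0,\Omega}^2$ is what the paper produces in its first step by taking $(\textbf{r}_1,w_1,\mu_1)=(\textbf{q}_h,-z_h,-\widehat{z}_h)$ in (\ref{bbs:1}) and $(\textbf{r}_2,w_2,\mu_2)=(-\textbf{p}_h,y_h,\widehat{y}_h)$ in (\ref{bbs:2}): when $\tau_1=\tau_2$ the sum of the two $\mathcal{B}$-forms vanishes identically, as an algebraic skew-symmetry valid for \emph{arbitrary} arguments, so the paper never needs to invoke the flux-conservation equations (\ref{HDG:3})--(\ref{HDG:4}), (\ref{HDG:7})--(\ref{HDG:8}) separately --- your unhybridized bookkeeping through those equations is an equivalent way to obtain the same cancellation, not a necessary correction to it. Where you genuinely diverge is the logical superstructure. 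The paper reduces everything, via finite dimensionality, to showing that the homogeneous system ($f=y_d=g=0$) has only the zero solution, and then uses (\ref{bbs:3}) with $v=0$, which is where $0\in U_{ad}$ enters; but that reduction is literally valid only for \emph{linear} systems, whereas (\ref{bbs}) contains the nonlinear variational inequality (equivalently $u_h=\Pi_{U_{ad}}(-\frac{1}{\alpha}\widehat{z}_h)$), so the paper's computation really delivers uniqueness-type information while existence is left tacit. Your route --- injectivity, hence bijectivity, of the two square linear HDG solvers from the energy identity; existence by minimizing the strictly convex, coercive, continuous reduced functional over the nonempty, closed, convex, bounded set $U_{ad}$; uniqueness by the two-solution monotonicity trick combined with the duality identity --- treats the nonlinearity head on and thereby fills exactly this gap. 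It also shows that $0\in U_{ad}$ is needed only to guarantee $U_{ad}\neq\emptyset$ (any constant in $[u_a,u_b]$ would serve), not as a test control, and its only extra cost is the verification of the reduced gradient formula $\langle j_h'(u_h),v\rangle=\langle\alpha u_h+\widehat{z}_h,v\rangle_{\partial\Omega}$, which, as you note, is one more run of the same duality computation.
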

\begin{proof}
Since the system (\ref{bbs}) is finite dimensional, we only need to prove that the system (\ref{bbs}) just has the zero solution for
the case of $f=y_d=g=0$. Let $(\textbf{r}_1,w_1,\mu_1)=(\textbf{q}_h,-z_h,-\widehat{z}_h)$ in (\ref{bbs:1}) and
$(\textbf{r}_2,w_2,\mu_2)=(-\textbf{p}_h,y_h,\widehat{y}_h)$ in (\ref{bbs:2}), we have
\begin{align*}
0=&\mathcal{B}(\textbf{p}_h,y_h,\widehat{y}_h;\textbf{q}_h,-z_h,-\widehat{z}_h;\tau_1)+
\mathcal{B}(\textbf{q}_h,z_h,\widehat{z}_h;-\textbf{p}_h,y_h,\widehat{y}_h;\tau_2)\\
=&(y_h,y_h)_{\mathcal{T}_h}-\langle u_h,\widehat{z}_h\rangle_{\partial\Omega}\geq (y_h,y_h)_{\mathcal{T}_h}
+\alpha\langle u_h,u_h\rangle_{\partial\Omega},
\end{align*}
from (\ref{bbs:3}) and the assumption $0\in U_{ad}$. Hence $y_h=0$ and $u_h=0$. Furthermore, let $(\textbf{r}_1,w_1,\mu_1)=
(\textbf{p}_h,y_h,\widehat{y}_h)$ in (\ref{bbs:1}) and $(\textbf{r}_2,w_2,\mu_2)=(\textbf{q}_h,z_h,\widehat{z}_h)$ in
(\ref{bbs:2}), we have
\begin{align*}
0=&(\textbf{p}_h,\textbf{p}_h)_{\mathcal{T}_h}+(y_h,y_h)_{\mathcal{T}_h}+\langle\tau_1(y_h-\widehat{y}_h),y_h-\widehat{y}_h\rangle_{\partial\mathcal{T}_h},\\
0=&(\textbf{q}_h,\textbf{q}_h)_{\mathcal{T}_h}+(z_h,z_h)_{\mathcal{T}_h}+\langle\tau_2(z_h-\widehat{z}_h),z_h-\widehat{z}_h\rangle_{\partial\mathcal{T}_h}
\end{align*}
Therefore $\textbf{p}_h=0$, $\widehat{y}_h=0$, $\textbf{q}_h=0$, $z_h=0$ and $\widehat{z}_h=0$. Then we conclude the proof.
\end{proof}

\section{The residual-type a posteriori error estimator}
\label{sec4}
\subsection{Auxiliary results}
Before we start to prove a posteriori error estimator for the model problem, we first provide some auxiliary results that will play an important
role in the proof.

For each element $K\in\mathcal{T}_h$ and face $F\in\mathcal{E}_h$, we denote $\Pi_j^o$ and $\Pi_j^{\partial}$ the $L^2$-projections onto
$\mathcal{P}^j(K)$ and $\mathcal{P}^j(F)$ for the nonnegative integer $j$. Then, from \cite{cc2019} we have the following error estimates
\begin{lemma}\label{lem1}
For any $K\in\mathcal{T}_h$ and $F\in\mathcal{E}_h$, we have
\begin{align*}
\|\Pi_j^o v\|_{0,K}\leq& \|v\|_{0,K}\quad \forall v\in L^2(K),\\
\|\Pi_j^{\partial}v\|_{0,F}\leq&\|v\|_{0,F}\quad \forall v\in L^2(F),\\
\|v-\Pi_0^o v\|_{0,K}\lesssim& h_K\|\nabla v\|_{0,K}\quad \forall v\in H^1(K),\\
\|v-\Pi_0^o v\|_{0,\partial K}\lesssim& h_K^{1/2}\|\nabla v\|_{0,K}\quad \forall v\in H^1(K).
\end{align*}
\end{lemma}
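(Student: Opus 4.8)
The plan is to prove the four inequalities separately: the two stability bounds follow directly from the orthogonality of the $L^2$-projection, while the two approximation bounds are obtained by a scaling argument to a reference element combined with Poincar\'e and trace inequalities.

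First, for the stability estimates, I would use that $\Pi_j^o v$ is by definition the orthogonal projection of $v$ onto $\mathcal{P}^j(K)$ with respect to the $L^2(K)$ inner product, so that $(v-\Pi_j^o v,w)_K=0$ for all $w\in\mathcal{P}^j(K)$. Choosing $w=\Pi_j^o v$ and applying Cauchy--Schwarz gives $\|\Pi_j^o v\|_{0,K}^2=(v,\Pi_j^o v)_K\leq\|v\|_{0,K}\|\Pi_j^o v\|_{0,K}$, from which the first inequality follows after dividing. The second inequality for $\Pi_j^{\partial}$ on the face $F$ is established in exactly the same way, replacing the element inner product $(\cdot,\cdot)_K$ by the face inner product and $\mathcal{P}^j(K)$ by $\mathcal{P}^j(F)$.

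Second, for the interior approximation estimate, I would observe that $\Pi_0^o v$ is the integral mean of $v$ over $K$, so that $v-\Pi_0^o v$ has zero mean and the bound is a Poincar\'e--Wirtinger inequality. I would map $K$ to a fixed reference element $\widehat{K}$ by an affine transformation, invoke the Poincar\'e inequality for zero-mean functions on $\widehat{K}$ with a constant depending only on $\widehat{K}$, and then scale back to $K$. Shape regularity guarantees that the Jacobian factors from the change of variables introduce exactly one power of $h_K$ on the gradient side, yielding $\|v-\Pi_0^o v\|_{0,K}\lesssim h_K\|\nabla v\|_{0,K}$ with a constant independent of $h_K$. For the boundary estimate, I would apply the scaled trace inequality $\|w\|_{0,\partial K}^2\lesssim h_K^{-1}\|w\|_{0,K}^2+h_K\|\nabla w\|_{0,K}^2$, valid for $w\in H^1(K)$ under shape regularity, to $w=v-\Pi_0^o v$; since $\nabla(v-\Pi_0^o v)=\nabla v$ and the first term is controlled by the interior estimate just proved, both contributions collapse to $h_K^{1/2}\|\nabla v\|_{0,K}$, which is the fourth inequality.

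The main obstacle is bookkeeping rather than conceptual: one must track the correct power of $h_K$ through the affine scaling in the last two parts and verify that every constant depends only on the shape-regularity parameter and the reference element $\widehat{K}$, not on the individual element $K$. The two stability bounds, by contrast, are immediate consequences of orthogonality and require no scaling.
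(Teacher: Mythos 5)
Your proof is correct, but there is nothing in the paper to compare it against: the paper does not prove Lemma \ref{lem1} at all, it simply quotes these estimates from \cite{cc2019}. Your argument --- orthogonality of the $L^2$-projection plus Cauchy--Schwarz for the two stability bounds, and for the approximation bounds the Poincar\'e--Wirtinger inequality under affine scaling to a reference element followed by the scaled trace inequality $\|w\|_{0,\partial K}^2\lesssim h_K^{-1}\|w\|_{0,K}^2+h_K\|\nabla w\|_{0,K}^2$ applied to $w=v-\Pi_0^o v$ --- is the standard route to these results and would make the paper self-contained where it currently relies on a citation. Two minor points to make explicit if you write this up: the affine-scaling step presupposes that each $K$ is the affine image of a fixed reference element, which holds here because $\mathcal{T}_h$ is a conforming, shape-regular simplicial mesh (the paper uses newest vertex bisection); and in the final estimate you should note that $\nabla(v-\Pi_0^o v)=\nabla v$ because $\Pi_0^o v$ is constant on $K$, which is exactly why both terms of the trace inequality collapse to $h_K\|\nabla v\|_{0,K}^2$ after inserting the interior bound. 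With those details spelled out, your proof is complete.
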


We conclude this subsection by introducing a lemma that has been proved in \cite{chz2017}.
\begin{lemma}\label{lem2}
Let $F$ be a face of the element $K\in\mathcal{T}_h$, $\textbf{n}_F$ the unit vector normal to $F$, and $s>0$. Assume that
$v$ is a given function in $H^{1+s}(K)$ and $\Delta v\in L^2(K)$. For any $w_h\in\mathcal{P}^k(F)$, we have
\begin{equation*}
\langle \nabla v\cdot\textbf{n}_F,w_h\rangle_F\lesssim h_F^{-1/2}\|w_h\|_{0,F}(\|\nabla v\|_{0,K}+h_K\|\Delta v\|_{0,K}).
\end{equation*}
\end{lemma}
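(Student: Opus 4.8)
The plan is to reduce the face pairing to volume integrals over $K$ by means of Green's formula, after replacing $w_h$ by a carefully chosen extension. The essential difficulty is that $v$ is only assumed to lie in $H^{1+s}(K)$ with $s>0$ possibly smaller than $1/2$; in that regime the normal trace $\nabla v\cdot\textbf{n}_F$ need not belong to $L^2(F)$, so one cannot apply the Cauchy--Schwarz inequality directly on $F$. Instead I would read the left-hand side as the duality pairing induced by the $H(div,K)$ normal trace: since $\nabla\cdot(\nabla v)=\Delta v\in L^2(K)$, the field $\nabla v$ belongs to $H(div,K)$ and its normal component is well defined in $H^{-1/2}(\partial K)$.

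First I would construct, by scaling from the reference element, an extension $W_h\in H^1(K)$ of $w_h$ satisfying $W_h|_F=w_h$, $W_h=0$ on $\partial K\backslash F$, together with
\begin{equation*}
\|W_h\|_{0,K}\lesssim h_F^{1/2}\|w_h\|_{0,F},\qquad \|\nabla W_h\|_{0,K}\lesssim h_F^{-1/2}\|w_h\|_{0,F}.
\end{equation*}
On the reference configuration such a trace-preserving extension exists with both norms controlled by $\|\widehat{w}\|_{0,\widehat{F}}$, all norms on the finite-dimensional polynomial space being equivalent; the stated powers of $h_F$ then follow from the affine-scaling relations $\|\cdot\|_{0,K}\approx h_K^{d/2}\|\cdot\|_{0,\widehat{K}}$, $\|\cdot\|_{0,F}\approx h_F^{(d-1)/2}\|\cdot\|_{0,\widehat{F}}$ and $\|\nabla(\cdot)\|_{0,K}\approx h_K^{d/2-1}\|\widehat{\nabla}(\cdot)\|_{0,\widehat{K}}$, combined with shape regularity ($h_F\approx h_K$).

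Next I would apply Green's formula to $\nabla v\in H(div,K)$ tested against $W_h\in H^1(K)$:
\begin{equation*}
\langle\nabla v\cdot\textbf{n},W_h\rangle_{\partial K}=(\Delta v,W_h)_K+(\nabla v,\nabla W_h)_K.
\end{equation*}
Because $W_h$ vanishes on every face of $K$ other than $F$ and equals $w_h$ on $F$, the boundary term on the left collapses to exactly $\langle\nabla v\cdot\textbf{n}_F,w_h\rangle_F$. Applying the Cauchy--Schwarz inequality to the two volume terms and inserting the scaling bounds for $W_h$ yields
\begin{equation*}
\langle\nabla v\cdot\textbf{n}_F,w_h\rangle_F\lesssim h_F^{-1/2}\|w_h\|_{0,F}\big(\|\nabla v\|_{0,K}+h_F\|\Delta v\|_{0,K}\big),
\end{equation*}
and the claim follows after bounding $h_F\lesssim h_K$ in the second summand.

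I expect the main obstacle to be the first step: producing an extension operator that simultaneously fixes the trace to be $w_h$ on $F$, vanishes on the remaining faces of $K$ (so that integration by parts localizes cleanly to $F$), and obeys the sharp $h_F^{\pm 1/2}$ scaling. A naive bubble-type multiplication would destroy the trace identity $W_h|_F=w_h$, so a genuine trace-preserving lifting on the reference element is required; once it is available, the remainder is a routine combination of the $H(div)$ Green's formula, Cauchy--Schwarz, and the affine-scaling estimates.
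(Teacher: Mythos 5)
Your argument breaks down at exactly the point you yourself flag as ``the main obstacle'': the trace-preserving lifting you need does not exist. You require $W_h\in H^1(K)$ with $W_h|_F=w_h$, $W_h=0$ on $\partial K\backslash F$, and $\|\nabla W_h\|_{0,K}\lesssim h_F^{-1/2}\|w_h\|_{0,F}$. But the trace of an $H^1(K)$ function must lie in $H^{1/2}(\partial K)$, and the prescribed trace $w_h\chi_F$ is discontinuous along $\partial F$ whenever $w_h$ does not vanish there (e.g.\ $w_h\equiv 1$); such a jump has infinite $H^{1/2}(\partial K)$-seminorm, just as the characteristic function of an interval fails to be in $H^{1/2}$. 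Equivalently, the extension of $w_h$ by zero belongs to $H^{1/2}(\partial K)$ only if $w_h$ lies in the Lions--Magenes space $H^{1/2}_{00}(F)$, i.e.\ only if $\int_F |w_h(x)|^2\,\mathrm{dist}(x,\partial F)^{-1}\,dx<\infty$, which fails for polynomials that are nonzero on $\partial F$. So no such $W_h$ exists --- not with a degraded constant, but not at all --- and norm equivalence on the finite-dimensional space $\mathcal{P}^k(\widehat F)$ cannot repair this: the obstruction is the infinite $H^{1/2}$-norm of the target trace, not the size of $\widehat w$.

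The gap is not cosmetic, because the statement your argument would prove is false. Your proof never uses $v\in H^{1+s}(K)$ except to give meaning to the pairing, so it would yield a constant independent of $s$; this is contradicted by a corner singularity. Take $K$ the unit triangle with vertices $(0,0),(1,0),(0,1)$, let $F$ be the edge on the $x$-axis, and set $v_\epsilon=r^\epsilon\sin(\epsilon\theta)$ in polar coordinates at the origin. Then $\Delta v_\epsilon=0$, $v_\epsilon\in H^{1+s}(K)$ for every $s<\epsilon$, and $\|\nabla v_\epsilon\|_{0,K}\lesssim\sqrt{\epsilon}$, while on $F$ one computes $\nabla v_\epsilon\cdot\textbf{n}_F=-\epsilon x^{\epsilon-1}$, so that $\langle\nabla v_\epsilon\cdot\textbf{n}_F,w_h\rangle_F=1$ for $w_h\equiv-1$ and every $\epsilon$. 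The ratio of the left- to the right-hand side of the lemma thus blows up like $\epsilon^{-1/2}$: the hidden constant necessarily depends on $s$ and degenerates as $s\downarrow 0$, so any correct proof must exploit the $H^{1+s}$ regularity quantitatively. This is precisely why the paper does not prove the lemma by an elementary lifting-plus-Green's-formula argument but cites it from Cai--He--Zhang \cite{chz2017}, where the low-regularity normal trace is handled by genuinely different means. Note finally that your fallback idea of a bubble-type cutoff changes the trace on $F$ from $w_h$ to $b_Fw_h$ and hence proves a different, weaker statement; it cannot deliver the lemma as stated either.
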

\subsection{Reliability of the error estimator}
\label{sec4:2}
We begin this section by defining error estimators for each $K\in\mathcal{T}_h$ in the following
\begin{align*}
\eta_{s,K,1}=&\|\textbf{p}_h+\nabla y_h\|_{0,K},\quad\quad \eta_{as,K,1}=\|\textbf{q}_h+\nabla z_h\|_{0,K},\\
\eta_{s,K,2}=&h_K\|f-\nabla\cdot\textbf{p}_h-y_h\|_{0,K},\quad\eta_{as,K,2}=h_K\|y_h-y_d-\nabla\cdot\textbf{q}_h-z_h\|_{0,K},\\
\eta_{s,\partial K}=&h_K^{-1/2}\|y_h-\widehat{y}_h\|_{0,\partial K},\quad\quad \eta_{as,\partial K}=
h_K^{-1/2}\|z_h-\widehat{z}_h\|_{0,\partial K}.
\end{align*}
Furthermore, we define
\begin{align*}
&\eta_s^2=\sum_{K\in\mathcal{T}_h}\{\eta_{s,K,1}^2+\eta_{s,K,2}^2+\eta_{s,\partial K}^2\},\\
&\eta_{as}^2=\sum_{K\in\mathcal{T}_h}\{\eta_{as,K,1}^2+\eta_{as,K,2}^2+\eta_{as,\partial K}^2\}.
\end{align*}

Next, we consider the following auxiliary problem: Find $\textbf{p}(u_h),\textbf{q}(u_h)\in H(div,\Omega)$ and $y(u_h),z(u_h)\in H^1(\Omega)$ such that
\begin{subequations}\label{auxi}
\begin{align}
\textbf{p}(u_h)+\nabla y(u_h)&=0\quad \rm{in}~\Omega,\label{auxi:1}\\
\nabla\cdot\textbf{p}(u_h)+y(u_h)&=f\quad \rm{in}~\Omega,\label{auxi:2}\\
-\textbf{p}(u_h)\cdot\textbf{n}&=u_h+g\quad \rm{on}~\partial\Omega,\label{auxi:3}\\
\textbf{q}(u_h)+\nabla z(u_h)&=0\quad \rm{in}~\Omega,\label{auxi:4}\\
\nabla\cdot\textbf{q}(u_h)+z(u_h)&=y(u_h)-y_d\quad \rm{in}~\Omega,\label{auxi:5}\\
-\textbf{q}(u_h)\cdot\textbf{n}&=0\quad \rm{on}~\Omega.\label{auxi:6}
\end{align}
\end{subequations}

Now the error $\|u-u_h\|_{0,\partial\Omega}+\|\textbf{p}-\textbf{p}(u_h)\|_{0,\Omega}+\|y-y(u_h)\|_{0,\Omega}+\|\textbf{q}-\textbf{q}(u_h)\|_{0,\Omega}
+\|z-z(u_h)\|_{0,\Omega}$ can be bounded by $\|z_h-z(u_h)\|_{1,\Omega}$ and $\Big\{\sum_{K\in\mathcal{T}_h}\eta_{as,\partial K}^2\Big\}^{1/2}$.
\begin{lemma}\label{lem3}
Let $(u,\textbf{p},y,\textbf{q},z)$ and $(u_h,\textbf{p}_h,y_h,\widehat{y}_h,\textbf{q}_h,z_h,\widehat{z}_h)$ be the solutions of problems
(\ref{mixed}) and (\ref{bbs}) respectively. Moreover let $(\textbf{p}(u_h),y(u_h),\textbf{q}(u_h),z(u_h))$ as defined above. Then the following
error estimate holds
\begin{align*}
&\|u-u_h\|_{0,\partial\Omega}+\|\textbf{p}-\textbf{p}(u_h)\|_{0,\Omega}+\|y-y(u_h)\|_{0,\Omega}+
\|\textbf{q}-\textbf{q}(u_h)\|_{0,\Omega}\\
&+\|z-z(u_h)\|_{0,\Omega}\lesssim\Big\{\sum_{K\in\mathcal{T}_h}\eta_{as,\partial K}^2\Big\}^{1/2}+\|z_h-z(u_h)\|_{1,\Omega}
\end{align*}
\end{lemma}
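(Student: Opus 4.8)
The plan is to first isolate the control error $\|u-u_h\|_{0,\partial\Omega}$ by combining the two variational inequalities, and then propagate that bound to the remaining quantities through the elliptic stability of the auxiliary problem (\ref{auxi}). First I would test the continuous inequality (\ref{mixed:7}) with $v=u_h\in U_{ad}$ and the discrete inequality (\ref{bbs:3}) with $v=u\in U_{ad}$, and add the results. Since $z$ is exactly the adjoint state belonging to the optimal control $u$, this yields $\alpha\|u-u_h\|_{0,\partial\Omega}^2\le\langle\widehat{z}_h-z,u-u_h\rangle_{\partial\Omega}$. Inserting the auxiliary adjoint state $z(u_h)$ then splits the right-hand side as $\langle\widehat{z}_h-z(u_h),u-u_h\rangle_{\partial\Omega}+\langle z(u_h)-z,u-u_h\rangle_{\partial\Omega}$.

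The crux is to show that the second cross term has a favourable sign. Writing $\phi:=y-y(u_h)$ and $\psi:=z-z(u_h)$, subtraction of (\ref{auxi}) from (\ref{mixed}) shows that $\phi$ solves $-\Delta\phi+\phi=0$ with $\nabla\phi\cdot\textbf{n}=u-u_h$, while $\psi$ solves $-\Delta\psi+\psi=\phi$ with homogeneous Neumann data. Applying Green's formula to $\langle\psi,\nabla\phi\cdot\textbf{n}\rangle_{\partial\Omega}$, using $\Delta\phi=\phi$, and then testing the weak form of the $\psi$-equation with $\phi$ gives the identity $\langle z-z(u_h),u-u_h\rangle_{\partial\Omega}=\|y-y(u_h)\|_{0,\Omega}^2\ge 0$. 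Hence the cross term $\langle z(u_h)-z,u-u_h\rangle_{\partial\Omega}$ is nonpositive and may be moved to the left, producing $\alpha\|u-u_h\|_{0,\partial\Omega}^2+\|y-y(u_h)\|_{0,\Omega}^2\le\langle\widehat{z}_h-z(u_h),u-u_h\rangle_{\partial\Omega}$. A Cauchy--Schwarz estimate followed by Young's inequality absorbs the factor $\|u-u_h\|_{0,\partial\Omega}$ and leaves $\|u-u_h\|_{0,\partial\Omega}\lesssim\|\widehat{z}_h-z(u_h)\|_{0,\partial\Omega}$.

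It remains to control this boundary norm and to transfer the bound to the other components. I would split $\widehat{z}_h-z(u_h)=(\widehat{z}_h-z_h)+(z_h-z(u_h))$ on $\partial\Omega$. The trace theorem bounds the second piece by $\|z_h-z(u_h)\|_{1,\Omega}$, and since $\eta_{as,\partial K}=h_K^{-1/2}\|z_h-\widehat{z}_h\|_{0,\partial K}$ with $h_K\lesssim 1$, summing $\|z_h-\widehat{z}_h\|_{0,F}^2$ over the boundary faces bounds the first piece by $\{\sum_{K}\eta_{as,\partial K}^2\}^{1/2}$. For the remaining norms, testing the weak form of the $\phi$-equation with $\phi$ and using the trace theorem gives $\|y-y(u_h)\|_{0,\Omega}+\|\textbf{p}-\textbf{p}(u_h)\|_{0,\Omega}\lesssim\|u-u_h\|_{0,\partial\Omega}$ (recall $\textbf{p}-\textbf{p}(u_h)=-\nabla\phi$); testing the $\psi$-equation with $\psi$ likewise gives $\|z-z(u_h)\|_{0,\Omega}+\|\textbf{q}-\textbf{q}(u_h)\|_{0,\Omega}\lesssim\|y-y(u_h)\|_{0,\Omega}\lesssim\|u-u_h\|_{0,\partial\Omega}$. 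Collecting these estimates together with the control bound yields the assertion.

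The main obstacle is the sign identity $\langle z-z(u_h),u-u_h\rangle_{\partial\Omega}=\|y-y(u_h)\|_{0,\Omega}^2$. Without recognizing this cancellation one is left with a cross term of size $\|u-u_h\|_{0,\partial\Omega}\,\|z-z(u_h)\|_{0,\partial\Omega}\lesssim\|u-u_h\|_{0,\partial\Omega}^2$, whose constant need not be small, so the control error cannot be absorbed and the estimate cannot be closed. Everything else reduces to standard elliptic stability and bookkeeping; the only additional point requiring care is that the edge residual $\widehat{z}_h-z_h$ enters only through its boundary-face contributions, which is exactly why the $\eta_{as,\partial K}$ part of the estimator suffices.
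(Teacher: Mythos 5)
Your proposal is correct, and its skeleton coincides with the paper's own proof: combining (\ref{mixed:7}) with $v=u_h$ and (\ref{bbs:3}) with $v=u$, the sign identity $\langle z-z(u_h),u-u_h\rangle_{\partial\Omega}=\|y-y(u_h)\|_{0,\Omega}^2$ obtained by integration by parts, the stability bounds $\|\textbf{p}-\textbf{p}(u_h)\|_{0,\Omega}+\|y-y(u_h)\|_{0,\Omega}\lesssim\|u-u_h\|_{0,\partial\Omega}$ and $\|\textbf{q}-\textbf{q}(u_h)\|_{0,\Omega}+\|z-z(u_h)\|_{0,\Omega}\lesssim\|y-y(u_h)\|_{0,\Omega}$, and the splitting $\widehat{z}_h-z(u_h)=(\widehat{z}_h-z_h)+(z_h-z(u_h))$ are exactly the steps in the paper. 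The one place you genuinely diverge is the jump term $\langle\widehat{z}_h-z_h,u-u_h\rangle_{\partial\Omega}$: the paper rewrites $u-u_h=-(\textbf{p}-\textbf{p}(u_h))\cdot\textbf{n}$ on $\partial\Omega$ via (\ref{mixed:3}) and (\ref{auxi:3}) and then invokes Lemma \ref{lem2}, which reproduces precisely the estimator weight $h_F^{-1/2}\|z_h-\widehat{z}_h\|_{0,F}$ times $\|\textbf{p}-\textbf{p}(u_h)\|_{0,K}+h_K\|y-y(u_h)\|_{0,K}$; you instead apply Cauchy--Schwarz directly on $\partial\Omega$ and absorb the resulting $h_K^{1/2}$ mismatch with the crude bound $h_K\lesssim\mathrm{diam}(\Omega)$. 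Your variant is more elementary --- it bypasses Lemma \ref{lem2} and the $H^{1+s}(K)$ regularity it presupposes for $y-y(u_h)$ --- at the cost of a bound that is weaker by a factor $h^{1/2}$ in the jump contribution; since the lemma only requires control by the unweighted quantity $\bigl\{\sum_{K\in\mathcal{T}_h}\eta_{as,\partial K}^2\bigr\}^{1/2}$, both versions close the argument. Note also that your use of the trace theorem to bound $\|z_h-z(u_h)\|_{0,\partial\Omega}$ by $\|z_h-z(u_h)\|_{1,\Omega}$ (a broken-$H^1$ quantity, as $z_h$ is discontinuous) is the same step the paper takes, so it is a shared feature rather than a point of divergence.
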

\begin{proof}
From (\ref{mixed}), (\ref{auxi}) and integration by parts to yield
\begin{equation}
\langle z-z(u_h),u-u_h\rangle_{\partial\Omega}=\|y-y(u_h)\|_{0,\Omega}^2.\label{lem3-proof:1}
\end{equation}
Obviously, $(\textbf{p}-\textbf{p}(u_h),y-y(u_h))$ is the solution of system (\ref{mixed:1})-
(\ref{mixed:3}) with $g=-u_h$ and $f=0$, and $(\textbf{q}-\textbf{q}(u_h),z-z(u_h))$ is the solution of system
(\ref{mixed:4})-(\ref{mixed:6}) with $y_d=y(u_h)$. Therefore we have
\begin{align}
\|\textbf{p}-\textbf{p}(u_h)\|_{0,\Omega}+\|y-y(u_h)\|_{0,\Omega}&\lesssim\|u-u_h\|_{0,\partial\Omega},\label{lem3-proof:2}\\
\|\textbf{q}-\textbf{q}(u_h)\|_{0,\Omega}+\|z-z(u_h)\|_{0,\Omega}&\lesssim\|y-y(u_h)\|_{0,\Omega},\label{lem3-proof:3}
\end{align}
by the trace theorem. From (\ref{mixed:7}), (\ref{bbs:3}) and (\ref{lem3-proof:1}), we obtain
\begin{align}
\alpha\|u-u_h\|_{0,\partial\Omega}^2\leq&\langle \widehat{z}_h-z(u_h),u-u_h\rangle_{\partial\Omega}\nonumber\\
\leq&\|z_h-z(u_h)\|_{1,\Omega}\|u-u_h\|_{0,\partial\Omega}\nonumber\\
&+\langle z_h-\widehat{z}_h,(\textbf{p}-\textbf{p}(u_h))\cdot\textbf{n}\rangle_{\partial\Omega}\nonumber\\
\lesssim&\|z_h-z(u_h)\|_{1,\Omega}\|u-u_h\|_{0,\partial\Omega}\label{lem3-proof:4}\\
&+\Big\{\sum_{K\in\mathcal{T}_h}\eta_{as,\partial K}^2\Big\}^{1/2}\Big(\|\textbf{p}-\textbf{p}(u_h)\|_{0,\Omega}\nonumber\\
&+\|y-y(u_h)\|_{0,\Omega}\Big),\nonumber
\end{align}
by the trace theorem and Lemma \ref{lem2}. Then we can conclude the proof by combining (\ref{lem3-proof:2})-(\ref{lem3-proof:4}).
\end{proof}
\begin{lemma}\label{lem4}
Let $(\textbf{p}(u_h),y(u_h),\textbf{q}(u_h),z(u_h))$ and $(u_h,\textbf{p}_h,y_h,\widehat{y}_h,\textbf{q}_h,z_h,\widehat{z}_h)$
be the solutions of problems (\ref{auxi}) and (\ref{bbs}), then the following error estimates hold
\begin{align}
\|\nabla(y_h-y(u_h))\|_{0,K}\leq \eta_{s,K,1}+\|\textbf{p}_h-\textbf{p}(u_h)\|_{0,K},\label{lem4:1}\\
\|\nabla(z_h-z(u_h))\|_{0,K}\leq \eta_{as,K,1}+\|\textbf{q}_h-\textbf{q}(u_h)\|_{0,K},\label{lem4:2}
\end{align}
for each $K\in\mathcal{T}_h$.
\end{lemma}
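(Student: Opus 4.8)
The plan is to exploit the exact flux--gradient relation satisfied by the auxiliary solution and then split the gradient error by a single triangle inequality; no functional-analytic machinery is required here. First I would observe that, by the first equation of the auxiliary problem (\ref{auxi:1}), the auxiliary state satisfies $\nabla y(u_h)=-\textbf{p}(u_h)$ pointwise in $\Omega$. Hence, on each element $K\in\mathcal{T}_h$, one can rewrite the gradient error by inserting and subtracting the discrete flux $\textbf{p}_h$:
\[
\nabla\big(y_h-y(u_h)\big)=\nabla y_h+\textbf{p}(u_h)=\big(\nabla y_h+\textbf{p}_h\big)+\big(\textbf{p}(u_h)-\textbf{p}_h\big).
\]

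The second step is to take the $L^2(K)$-norm and apply the triangle inequality to the two parenthesized terms, which gives
\[
\|\nabla(y_h-y(u_h))\|_{0,K}\leq\|\textbf{p}_h+\nabla y_h\|_{0,K}+\|\textbf{p}_h-\textbf{p}(u_h)\|_{0,K}.
\]
Recognizing the first summand as the volumetric estimator $\eta_{s,K,1}$, exactly by its definition, yields (\ref{lem4:1}) with constant one. The adjoint estimate (\ref{lem4:2}) follows in an identical fashion: using the first adjoint equation (\ref{auxi:4}), namely $\nabla z(u_h)=-\textbf{q}(u_h)$, one writes $\nabla(z_h-z(u_h))=(\nabla z_h+\textbf{q}_h)+(\textbf{q}_h-\textbf{q}(u_h))$, and the triangle inequality together with the definition of $\eta_{as,K,1}$ closes the argument.

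I do not expect any genuine obstacle in this lemma. The essential point is structural rather than technical: the auxiliary problem satisfies the constitutive law $\textbf{p}(u_h)+\nabla y(u_h)=0$ \emph{exactly}, whereas the HDG pair $(\textbf{p}_h,y_h)$ satisfies it only up to the residual $\textbf{p}_h+\nabla y_h$, which is precisely the quantity that $\eta_{s,K,1}$ measures. The only item worth careful bookkeeping is therefore to ensure that inserting $\pm\textbf{p}_h$ (respectively $\pm\textbf{q}_h$) reproduces exactly the two advertised terms, so that the sharp constant one in the statement is preserved and no spurious factor or additional estimator contribution appears.
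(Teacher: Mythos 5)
Your proof is correct and follows exactly the paper's argument: the paper likewise observes that $\textbf{p}(u_h)=-\nabla y(u_h)$ and $\textbf{q}(u_h)=-\nabla z(u_h)$ elementwise and concludes by the triangle inequality. Your write-up simply makes the insertion of $\pm\textbf{p}_h$ (resp.\ $\pm\textbf{q}_h$) explicit, which the paper leaves implicit.
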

\begin{proof}
Since $\textbf{p}(u_h)=-\nabla y(u_h)$ and $\textbf{q}(u_h)=-\nabla z(u_h)$ in each $K\in\mathcal{T}_h$, we can obtain the error estimates
(\ref{lem4:1}) and (\ref{lem4:2}) directly by the triangle inequality.
\end{proof}

Now we are ready to prove a posteriori error estimators for $\|\textbf{p}_h-\textbf{p}(u_h)\|_{0,\Omega}+\|y_h-y(u_h)\|_{0,\Omega}$
and $\|\textbf{q}_h-\textbf{q}(u_h)\|_{0,\Omega}+\|z_h-z(u_h)\|_{0,\Omega}$.
\begin{lemma}\label{lem5}
Let $(\textbf{p}(u_h),y(u_h),\textbf{q}(u_h),z(u_h))$ and $(u_h,\textbf{p}_h,y_h,\widehat{y}_h,\textbf{q}_h,z_h,\widehat{z}_h)$
be the solutions of problems (\ref{auxi}) and (\ref{bbs}), then we have
\begin{align}
&\|\textbf{p}_h-\textbf{p}(u_h)\|_{0,\Omega}+\|y_h-y(u_h)\|_{0,\Omega}+\|\tau_1^{1/2}(y_h-\widehat{y}_h)\|_{0,\partial\mathcal{T}_h}\lesssim\eta_s,\label{lem5:1}\\
&\|\textbf{q}_h-\textbf{q}(u_h)\|_{0,\Omega}+\|z_h-z(u_h)\|_{0,\Omega}+\|\tau_2^{1/2}(z_h-\widehat{z}_h)\|_{0,\partial\mathcal{T}_h}\lesssim\eta_s+\eta_{as},
\label{lem5:2}
\end{align}
for $\tau_1=\tau_2=h_K^{-1}$ on each $\partial K$ for all $K\in\mathcal{T}_h$, where
\begin{equation*}
\|\cdot\|_{0,\partial\mathcal{T}_h}^2=\sum_{K\in
\mathcal{T}_h}\|\cdot\|_{0,\partial K}^2.
\end{equation*}
\end{lemma}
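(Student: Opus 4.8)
The plan is to read (\ref{lem5:1}) as a reliability estimate for the HDG discretization of the fixed auxiliary problem (\ref{auxi}) driven by the computed control $u_h$, exploiting that this problem is coercive because of the reaction term $+y$. Write $\textbf{e}_p=\textbf{p}_h-\textbf{p}(u_h)$, $e_y=y_h-y(u_h)$, $\hat{e}_y=\widehat{y}_h-y(u_h)$. First I would establish consistency: integrating by parts and using $\textbf{p}(u_h)=-\nabla y(u_h)$, $\nabla\cdot\textbf{p}(u_h)+y(u_h)=f$, the $H(div,\Omega)$-conservativity of $\textbf{p}(u_h)$ and the Neumann condition (\ref{auxi:3}), one checks that $(\textbf{p}(u_h),y(u_h),y(u_h))$ satisfies the same identity (\ref{bbs:1}) as $(\textbf{p}_h,y_h,\widehat{y}_h)$ for every test triple; subtracting gives the Galerkin orthogonality $\mathcal{B}(\textbf{e}_p,e_y,\hat{e}_y;\textbf{r}_1,w_1,\mu_1;\tau_1)=0$ for all discrete test functions. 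I would also record the element-wise residual identities $\textbf{e}_p+\nabla e_y=\textbf{p}_h+\nabla y_h$ and $\nabla\cdot\textbf{e}_p+e_y=-(f-\nabla\cdot\textbf{p}_h-y_h)$, together with the consequence of (\ref{HDG:2}) that $(f-\nabla\cdot\textbf{p}_h-y_h,w_h)_{\mathcal{T}_h}=\langle\tau_1(y_h-\widehat{y}_h),w_h\rangle_{\partial\mathcal{T}_h}$ for all $w_h\in W_h^k$; these carry exactly $\eta_{s,K,1}$, $\eta_{s,K,2}$ and the jump term $\eta_{s,\partial K}$.

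The core estimate is the $L^2$ bound on $e_y$, which I would obtain by duality. Let $(\boldsymbol{\phi},\psi)$ solve the reaction--diffusion dual $\boldsymbol{\phi}+\nabla\psi=0$, $\nabla\cdot\boldsymbol{\phi}+\psi=e_y$ in $\Omega$ with $\boldsymbol{\phi}\cdot\textbf{n}=0$ on $\partial\Omega$, so that elliptic regularity gives $\|\nabla\psi\|_{0,\Omega}+\|\Delta\psi\|_{0,\Omega}\lesssim\|e_y\|_{0,\Omega}$. Starting from $\|e_y\|_{0,\Omega}^2=(e_y,\nabla\cdot\boldsymbol{\phi}+\psi)_{\mathcal{T}_h}$, integrating by parts and substituting the residual identities, the $(e_y,\psi)$ contributions cancel and one is left with $-(\textbf{p}_h+\nabla y_h,\boldsymbol{\phi})$ plus boundary terms. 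Since $\boldsymbol{\phi}=-\nabla\psi$ is conservative and vanishes on $\partial\Omega$, the interior contributions of single-valued quantities cancel and the surviving face terms reduce to $\langle y_h-\widehat{y}_h,\nabla\psi\cdot\textbf{n}\rangle_{\partial\mathcal{T}_h}$, which Lemma \ref{lem2} bounds by $\{\sum_K\eta_{s,\partial K}^2\}^{1/2}(\|\nabla\psi\|_{0,\Omega}+\|\Delta\psi\|_{0,\Omega})$, and to a combined term $\langle\tau_1(y_h-\widehat{y}_h),\psi-\Pi_0^o\psi\rangle_{\partial\mathcal{T}_h}$, which the fourth estimate of Lemma \ref{lem1} bounds by $\{\sum_K\eta_{s,\partial K}^2\}^{1/2}\|\nabla\psi\|_{0,\Omega}$. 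Collecting and dividing by $\|e_y\|_{0,\Omega}$ yields $\|e_y\|_{0,\Omega}\lesssim\eta_s$.

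For the flux I would deliberately avoid the energy identity $\mathcal{B}(\textbf{e}_p,e_y,\hat{e}_y;\textbf{e}_p,e_y,\hat{e}_y;\tau_1)=\|\textbf{e}_p\|_{0,\Omega}^2+\|e_y\|_{0,\Omega}^2+\|\tau_1^{1/2}(y_h-\widehat{y}_h)\|_{0,\partial\mathcal{T}_h}^2$: although this identity holds, $\mathcal{B}$ is \emph{not} adjoint consistent, so inserting the exact solution into the test slot leaves an uncontrollable term proportional to $(\textbf{e}_p,\textbf{p}(u_h))$. Instead I would use $\textbf{e}_p=(\textbf{p}_h+\nabla y_h)-\nabla e_y$ to get $\|\textbf{e}_p\|_{0,\Omega}\le\{\sum_K\eta_{s,K,1}^2\}^{1/2}+\|\nabla e_y\|_{0,\Omega}$ and bound the broken gradient $\|\nabla e_y\|_{0,\Omega}$ by comparing $y_h$ with a conforming averaged recovery $y_h^c\in H^1(\Omega)$: the nonconforming part $\|\nabla(y_h-y_h^c)\|_{0,\Omega}$ is controlled by the interelement jumps of $y_h$, which the conservativity of $\widehat{\textbf{p}}_h\cdot\textbf{n}$ together with $\tau_1=h_K^{-1}$ turns into $\{\sum_K\eta_{s,\partial K}^2\}^{1/2}$, while $\|\nabla(y_h^c-y(u_h))\|_{0,\Omega}$ is handled by a standard conforming residual argument whose element and edge residuals are rewritten, via $\nabla y_h=(\textbf{p}_h+\nabla y_h)-\textbf{p}_h$ and the identities of the first step, in terms of $\eta_{s,K,1},\eta_{s,K,2},\eta_{s,\partial K}$. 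The stabilization term is already $\{\sum_K\eta_{s,\partial K}^2\}^{1/2}$ by definition of $\tau_1$, which closes (\ref{lem5:1}).

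The hardest point is precisely the flux: the lack of adjoint consistency of $\mathcal{B}$ rules out the direct energy argument and forces the gradient-recovery detour, and the boundary terms generated by the HDG Neumann condition (\ref{HDG:4}) produce a data-oscillation contribution $\|(I-\Pi_k^{\partial})(u_h+g)\|_{0,\partial\Omega}$ that must be shown to be of higher order and absorbed. Finally, (\ref{lem5:2}) follows by applying the identical machinery to the adjoint auxiliary problem (\ref{auxi:4})--(\ref{auxi:6}); the only new feature is that its right-hand side is $y(u_h)-y_d$ whereas the discrete adjoint (\ref{bbs:2}) is driven by $y_h-y_d$, so the adjoint Galerkin orthogonality is inhomogeneous with defect $(e_y,\cdot)_{\mathcal{T}_h}$. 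This defect contributes an extra $\|e_y\|_{0,\Omega}\lesssim\eta_s$ in both the duality and flux steps, which is exactly the origin of the additional $\eta_s$ in the bound $\eta_s+\eta_{as}$.
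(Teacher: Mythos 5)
Your proposal diverges from the paper at its central design decision, and the divergence rests on a claim that is false. You assert that the energy identity cannot be exploited because $\mathcal{B}$ is ``not adjoint consistent,'' so that putting the exact solution in the test slot ``leaves an uncontrollable term proportional to $(\textbf{e}_p,\textbf{p}(u_h))$.'' The paper's proof of (\ref{lem5:1}) is exactly this energy argument, and it works. The step you are missing is that one never expands $\mathcal{B}$ with the exact solution sitting inertly in the second slot: writing $e_{\textbf{p}}=\textbf{p}(u_h)-\textbf{p}_h$, $e_y=y(u_h)-y_h$, $e_{\widehat{y}}=y(u_h)-\widehat{y}_h$, Galerkin orthogonality gives $\mathcal{B}(e;e;\tau_1)=\mathcal{B}(e;\delta_{\textbf{p}},\delta_y,\delta_{\widehat{y}};\tau_1)$ with $\delta_{\textbf{p}}=e_{\textbf{p}}-\textbf{r}$, $\delta_y=e_y-w$, $\delta_{\widehat{y}}=e_{\widehat{y}}-\mu$ for arbitrary discrete $(\textbf{r},w,\mu)$, and an elementwise integration by parts combined with the strong relations (\ref{auxi:1})--(\ref{auxi:2}), i.e. $e_{\textbf{p}}+\nabla e_y=-(\textbf{p}_h+\nabla y_h)$ and $\nabla\cdot e_{\textbf{p}}+e_y=f-\nabla\cdot\textbf{p}_h-y_h$, collapses every exact-solution contribution into computable residuals:
\begin{align*}
\mathcal{B}(e;\delta_{\textbf{p}},\delta_y,\delta_{\widehat{y}};\tau_1)
=&-(\textbf{p}_h+\nabla y_h,\delta_{\textbf{p}})_{\mathcal{T}_h}+(f-\nabla\cdot\textbf{p}_h-y_h,\delta_y)_{\mathcal{T}_h}
+\langle y_h-\widehat{y}_h,\delta_{\textbf{p}}\cdot\textbf{n}\rangle_{\partial\mathcal{T}_h}\\
&+\langle \tau_1(\widehat{y}_h-y_h),\delta_y\rangle_{\partial\mathcal{T}_h}
-\langle(\textbf{p}(u_h)-\textbf{p}_h)\cdot\textbf{n}+\tau_1(\widehat{y}_h-y_h),\delta_{\widehat{y}}\rangle_{\partial\mathcal{T}_h}.
\end{align*}
No term $(e_{\textbf{p}},\textbf{p}(u_h))$ survives. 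The last term is disposed of using (\ref{HDG:3}), (\ref{HDG:4}) and (\ref{auxi:3}); then choosing $\textbf{r}=\Pi_0^o e_{\textbf{p}}$, $w=\Pi_0^o e_y$, bounding the four residual terms with Lemma \ref{lem1}, Lemma \ref{lem2} and Lemma \ref{lem4}, and absorbing the error norms by Young's inequality finishes (\ref{lem5:1}); the bound (\ref{lem5:2}) follows the same way, with the inhomogeneous right-hand side $(y_h-y_d,\cdot)$ versus $y(u_h)-y_d$ producing the extra $\eta_s$ --- that last mechanism you did identify correctly.

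Your replacement route (duality for $\|e_y\|_{0,\Omega}$, then a conforming recovery $y_h^c$ for the broken gradient) is not unworkable in principle, but it is far heavier than what is needed, and its hardest steps are named rather than carried out: $y_h^c$ is not a Galerkin solution, so the ``standard conforming residual argument'' for $\|\nabla(y_h^c-y(u_h))\|_{0,\Omega}$ requires a quasi-orthogonality argument through the HDG equations together with Cl\'ement/Oswald interpolation estimates, and the element and face residuals it generates (normal-derivative jumps of $y_h$, the boundary residual of $u_h+g+\nabla y_h\cdot\textbf{n}$) must then be converted into the actual estimator contributions $\eta_{s,K,1}$, $\eta_{s,K,2}$, $\eta_{s,\partial K}$ via inverse inequalities and the single-valuedness of $\widehat{\textbf{p}}_h\cdot\textbf{n}$; none of this is in your sketch. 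One point genuinely in your favor: the boundary mismatch $(I-\Pi_k^{\partial})(u_h+g)$ you flag is real. Since (\ref{HDG:4}) only enforces $\widehat{\textbf{p}}_h\cdot\textbf{n}=-\Pi_k^{\partial}(u_h+g)$ while (\ref{auxi:3}) enforces $\textbf{p}(u_h)\cdot\textbf{n}=-(u_h+g)$ exactly, the final boundary term above equals $\langle (I-\Pi_k^{\partial})(u_h+g),y(u_h)\rangle_{\partial\Omega}$, which the paper declares to be zero but which vanishes only when $u_h+g$ restricted to boundary faces is a polynomial of degree at most $k$; in general it is an oscillation term, and your handling of it is more careful than the paper's. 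That observation, however, does not repair the main defect of the proposal: you classified the direct energy argument as impossible when it is both possible and the intended proof, and you replaced it with a longer argument whose critical steps remain unproved.
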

\begin{proof}
According to the definition of the operator $\mathcal{B}$ to infer that
\begin{equation*}
\mathcal{B}(\textbf{p}(u_h)-\textbf{p}_h,y(u_h)-y_h,y(u_h)-\widehat{y}_h;\textbf{r},w,\mu;\tau_1)=0,
\end{equation*}
for any $(\textbf{r},w,\mu)\in\textbf{V}_h^k\times W_h^k\times M_h^k$. Then from the above equality and the definition of the operator $\mathcal{B}$,
we have
\begin{align}
&\|\textbf{p}_h-\textbf{p}(u_h)\|_{0,\Omega}^2+\|y_h-y(u_h)\|_{0,\Omega}^2+\|\tau_1^{1/2}(y_h-\widehat{y}_h)\|_{0,\partial\mathcal{T}_h}^2\nonumber\\
=&\mathcal{B}(\textbf{p}(u_h)-\textbf{p}_h,y(u_h)-y_h,y(u_h)-\widehat{y}_h;\textbf{p}(u_h)-\textbf{p}_h,y(u_h)-y_h,y(u_h)-\widehat{y}_h;\tau_1)\nonumber\\
=&\mathcal{B}(\textbf{p}(u_h)-\textbf{p}_h,y(u_h)-y_h,y(u_h)-\widehat{y}_h;\delta_{\textbf{p}},\delta_{y},\delta_{\widehat{y}};\tau_1),\nonumber
\end{align}
where $\delta_{\textbf{p}}=\textbf{p}(u_h)-\textbf{p}_h-\textbf{r}$, $\delta_{y}=y(u_h)-y_h-w$, and $\delta_{\widehat{y}}
=y(u_h)-\widehat{y}_h-\mu$ for any $(\textbf{r},w,\mu)\in\textbf{V}_h^k\times W_h^k\times M_h^k$. By integration by parts we yield
\begin{align}
&\mathcal{B}(\textbf{p}(u_h)-\textbf{p}_h,y(u_h)-y_h,y(u_h)-\widehat{y}_h;\delta_{\textbf{p}},\delta_{y},\delta_{\widehat{y}};\tau_1)\nonumber\\
=&-(\textbf{p}_h+\nabla y_h,\delta_{\textbf{p}})_{\mathcal{T}_h}+(f-\nabla\cdot\textbf{p}_h-y_h,\delta_y)_{\mathcal{T}_h}\nonumber\\
&+\langle y_h-\widehat{y}_h,\delta_{\textbf{p}}\cdot\textbf{n}\rangle_{\partial\mathcal{T}_h}+\langle \tau_1(\widehat{y}_h-y_h),\delta_y\rangle_{\partial\mathcal{T}_h}\nonumber\\
&-\langle(\textbf{p}(u_h)-\textbf{p}_h)\cdot\textbf{n}+\tau_1(\widehat{y}_h-y_h),\delta_{\widehat{y}}\rangle_{\partial\mathcal{T}_h}.\nonumber
\end{align}
From (\ref{HDG:3}), (\ref{HDG:4}) and (\ref{auxi:3}), we arrive at
\begin{align}
-\langle(\textbf{p}(u_h)-\textbf{p}_h)\cdot\textbf{n}+\tau_1(\widehat{y}_h-y_h),\delta_{\widehat{y}}\rangle_{\partial\mathcal{T}_h}=0.\nonumber
\end{align}
Now we set $\textbf{r}=\Pi_0^o(\textbf{p}(u_h)-\textbf{p}_h)$ in the definition of $\delta_{\textbf{p}}$ and
$w=\Pi_0^o(y(u_h)-y_h)$ in the definition of $\delta_{y}$. Then from Lemma \ref{lem1} we have
\begin{align}
&-(\textbf{p}_h+\nabla y_h,\delta_{\textbf{p}})_{\mathcal{T}_h}+(f-\nabla\cdot\textbf{p}_h-y_h,\delta_y)_{\mathcal{T}_h}\nonumber\\
\lesssim&\Big\{\sum_{K\in\mathcal{T}_h}\eta_{s,K,1}^2\Big\}^{1/2}\|\textbf{p}(u_h)-\textbf{p}_h\|_{0,\Omega}\nonumber\\
&+\Big\{\sum_{K\in\mathcal{T}_h}\eta_{s,K,2}^2\Big\}^{1/2}\|\nabla(y(u_h)-y_h)\|_{0,\Omega},\nonumber
\end{align}
and
\begin{align}
&\langle \tau_1(\widehat{y}_h-y_h),\delta_y\rangle_{\partial\mathcal{T}_h}\nonumber\\
\lesssim&\Big\{\sum_{K\in\mathcal{T}_h}\eta_{s,\partial K}^2\Big\}^{1/2}\|\nabla(y(u_h)-y_h)\|_{0,\Omega}.\nonumber
\end{align}
By using Lemma \ref{lem2} to yield
\begin{align}
&\langle y_h-\widehat{y}_h,\delta_{\textbf{p}}\cdot\textbf{n}\rangle_{\partial\mathcal{T}_h}\nonumber\\
=&\sum_{K\in\mathcal{T}_h}\sum_{F\in\partial K}\langle y_h-\widehat{y}_h,\delta_{\textbf{p}}\cdot\textbf{n}\rangle_F\nonumber\\
\lesssim&\Big\{\sum_{K\in\mathcal{T}_h}\eta_{s,\partial K}^2\Big\}^{1/2}\|\textbf{p}(u_h)-\textbf{p}_h\|_{0,\Omega}\nonumber\\
&+\Big\{\sum_{K\in\mathcal{T}_h}\eta_{s,\partial K}^2\Big\}^{1/2}\Big(\Big\{\sum_{K\in\mathcal{T}_h}\eta_{s,K,2}^2\Big\}^{1/2}+
\|y_h-y(u_h)\|_{0,\Omega}\Big).\nonumber
\end{align}
Now we can obtain the approximation result (\ref{lem5:1}) by combining Lemma \ref{lem4}, Young's inequality and the above equalities and inequalities.
Moreover, the error estimate (\ref{lem5:2}) can be proved similarly.
\end{proof}
\begin{remark}
Compared to the error estimators introduced in \cite{cz2012, cz2013, cz2013a}, the Lemma \ref{lem5} provides an a posteriori error
estimator without any postprocessing solutions, hence it is easer to calculate.
\end{remark}

Combining Lemma \ref{lem3}, Lemma \ref{lem4} and Lemma \ref{lem5} results in the following reliability estimate
\begin{theorem}
Let $(u,\textbf{p},y,\textbf{q},z)$ and $(u_h,\textbf{p}_h,y_h,\widehat{y}_h,\textbf{q}_h,z_h,\widehat{z}_h)$ be the solutions of problems
(\ref{mixed}) and (\ref{bbs}). Then we have the following error estimate
\begin{align*}
&\|u-u_h\|_{0,\partial\Omega}+\|\textbf{p}-\textbf{p}_h\|_{0,\Omega}+\|y-y_h\|_{1,\Omega}\\
&+\|\textbf{q}-\textbf{q}_h\|_{0,\Omega}+\|z-z_h\|_{1,\Omega}\lesssim\eta_s+\eta_{as},
\end{align*}
for $\tau_1=\tau_2=h_K^{-1}$ on each $\partial K$ for all $K\in\mathcal{T}_h$.
\end{theorem}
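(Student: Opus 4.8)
The plan is to route every term on the left-hand side through the auxiliary quantities $(\textbf{p}(u_h),y(u_h),\textbf{q}(u_h),z(u_h))$ of (\ref{auxi}) and then feed the resulting pieces into Lemmas \ref{lem3}, \ref{lem4} and \ref{lem5}. First I would apply the triangle inequality to the flux and scalar contributions, writing for instance
\begin{align*}
\|\textbf{p}-\textbf{p}_h\|_{0,\Omega}&\leq\|\textbf{p}-\textbf{p}(u_h)\|_{0,\Omega}+\|\textbf{p}(u_h)-\textbf{p}_h\|_{0,\Omega},\\
\|y-y_h\|_{1,\Omega}&\leq\|y-y(u_h)\|_{1,\Omega}+\|y(u_h)-y_h\|_{1,\Omega},
\end{align*}
and analogously for $\textbf{q}$ and $z$. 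The control error $\|u-u_h\|_{0,\partial\Omega}$ requires no splitting, since the auxiliary problem is already built on the discrete control $u_h$; it will be supplied directly by Lemma \ref{lem3}. This reduces the theorem to estimating two families: the continuous-versus-auxiliary errors and the auxiliary-versus-discrete errors.

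For the continuous-versus-auxiliary family, Lemma \ref{lem3} already bounds the control error together with the $L^2$ norms $\|\textbf{p}-\textbf{p}(u_h)\|_{0,\Omega}$, $\|y-y(u_h)\|_{0,\Omega}$, $\|\textbf{q}-\textbf{q}(u_h)\|_{0,\Omega}$ and $\|z-z(u_h)\|_{0,\Omega}$ by $\{\sum_K\eta_{as,\partial K}^2\}^{1/2}+\|z_h-z(u_h)\|_{1,\Omega}$. The $H^1$-seminorm pieces then need no new work: since $\textbf{p}=-\nabla y$ and $\textbf{p}(u_h)=-\nabla y(u_h)$ we have $\|\nabla(y-y(u_h))\|_{0,\Omega}=\|\textbf{p}-\textbf{p}(u_h)\|_{0,\Omega}$, and likewise $\|\nabla(z-z(u_h))\|_{0,\Omega}=\|\textbf{q}-\textbf{q}(u_h)\|_{0,\Omega}$, so these too are covered by Lemma \ref{lem3}.

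For the auxiliary-versus-discrete family, the $L^2$ flux and scalar errors and the stabilization terms follow directly from Lemma \ref{lem5}, giving $\|\textbf{p}(u_h)-\textbf{p}_h\|_{0,\Omega}+\|y(u_h)-y_h\|_{0,\Omega}\lesssim\eta_s$ and $\|\textbf{q}(u_h)-\textbf{q}_h\|_{0,\Omega}+\|z(u_h)-z_h\|_{0,\Omega}\lesssim\eta_s+\eta_{as}$. The remaining gradient errors $\|\nabla(y(u_h)-y_h)\|_{0,\Omega}$ and $\|\nabla(z(u_h)-z_h)\|_{0,\Omega}$ are reduced by Lemma \ref{lem4} to $\eta_{s,K,1}$ (resp. $\eta_{as,K,1}$) plus the flux errors $\|\textbf{p}_h-\textbf{p}(u_h)\|_{0,K}$ (resp. $\|\textbf{q}_h-\textbf{q}(u_h)\|_{0,K}$) that Lemma \ref{lem5} has already controlled.

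The only point demanding care is the coupling term $\|z_h-z(u_h)\|_{1,\Omega}$ sitting on the right-hand side of Lemma \ref{lem3} and propagating into the control estimate. I would bound its $L^2$ part via Lemma \ref{lem5} and its gradient part via Lemma \ref{lem4} followed again by Lemma \ref{lem5}, so that $\|z_h-z(u_h)\|_{1,\Omega}\lesssim\eta_s+\eta_{as}$. The key observation breaking any apparent circularity is that the Lemma \ref{lem5} bound for $\|z_h-z(u_h)\|_{0,\Omega}$ is expressed purely in terms of the residual indicators $\eta_s$ and $\eta_{as}$, with no dependence on $\|u-u_h\|_{0,\partial\Omega}$. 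Substituting this into Lemma \ref{lem3} and collecting all contributions then yields the stated reliability bound; the main obstacle is simply ordering the substitutions so that this coupling is resolved before assembling the final estimate.
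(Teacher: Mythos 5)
Your proposal is correct and follows essentially the same route as the paper, which states the theorem as an immediate consequence of combining Lemmas \ref{lem3}, \ref{lem4} and \ref{lem5} without spelling out the details. Your elaboration—splitting through the auxiliary problem (\ref{auxi}), identifying $\nabla(y-y(u_h))$ with $\textbf{p}-\textbf{p}(u_h)$, and resolving the coupling term $\|z_h-z(u_h)\|_{1,\Omega}$ via Lemmas \ref{lem4} and \ref{lem5} before inserting it into Lemma \ref{lem3}—is exactly the intended assembly and contains no gaps.
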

\section{Efficiency of the error estimator}
\label{sec5}
In this section, we will prove that, up to data oscillations, the estimator $\eta_s+\eta_{as}$ also provides a lower bound for the error.
Especially, we will show that the local contributions of the estimator can be bounded from above by the local constituents of the error and
the associated data oscillations.

First of all, we define the data oscillations by
\begin{align*}
osc^2(f,\mathcal{T}_h)&=\sum_{K\in\mathcal{T}_h}osc^2(f,K),\\
osc^2(y_d,\mathcal{T}_h)&=\sum_{K\in\mathcal{T}_h}osc^2(y_d,K),
\end{align*}
where
\begin{align*}
osc(f,K)&=h_K\|f-\Pi_k^of\|_{0,K},\\
osc(y_d,K)&=h_K\|y_d-\Pi_k^oy_d\|_{0,K}.
\end{align*}
Obviously, $osc(f,K)$ and $osc(y_d,K)$ are of same order with $\eta_{s,K,2}$ and $\eta_{as,K,2}$ for non smooth
$f$ and $y_d$ and of higher order for smooth $f$ and $y_d$.

Next we denote by $\lambda_i^K$, $1\leq i\leq 3$, the barycentric coordinates of $K\in\mathcal{T}_h$ and refer to $E_K=27\Pi_{i=1}^3\lambda_i^K$
as the associated element bubble function. From \cite{hiis2006}, we have
\begin{subequations}\label{bubble}
\begin{align}
&\|p_K\|_{0,K}^2\lesssim(p_K,p_KE_K)_{K}\quad K\in\mathcal{T}_h,\label{bubble:1}\\
&\|p_KE_K\|_{0,K}\lesssim\|p_K\|_{0,K}\quad K\in\mathcal{T}_h,\label{bubble:2}\\
&\|p_KE_K\|_{1,K}\lesssim h_K^{-1}\|p_K\|_{0,K}\quad K\in\mathcal{T}_h,\label{bubble:3}
\end{align}
\end{subequations}
for $p_K\in\mathcal{P}^k(K)$.
Then the following error estimates hold.
\begin{theorem}\label{lem6}
Let $(u,\textbf{p},y,\textbf{q},z)$, $(\textbf{p}(u_h),y(u_h),\textbf{q}(u_h),z(u_h))$ and $(u_h,\textbf{p}_h,y_h,\widehat{y}_h,\\
\textbf{q}_h,z_h,\widehat{z}_h)$ be the solutions of problems (\ref{mixed}), (\ref{auxi}) and (\ref{bbs}) respectively. Then we have
\begin{align}
\eta_{s,K,1}\leq&\|\textbf{p}-\textbf{p}_h\|_{0,K}+\|\nabla(y-y_h)\|_{0,K}\quad K\in\mathcal{T}_h,\label{lem6:1}\\
\eta_{s,K,2}\lesssim& osc(f,K)+\|\textbf{p}-\textbf{p}_h\|_{0,K}+\|y-y_h\|_{0,K}\quad K\in\mathcal{T}_h,\label{lem6:2}\\
\sum_{K\in\mathcal{T}_h}\eta_{s,\partial K}^2\lesssim &\|u-u_h\|_{0,\partial\Omega}^2+\|\textbf{p}-\textbf{p}_h\|_{0,\Omega}^2+
\|y-y_h\|_{1,\Omega}^2+osc^2(f,\mathcal{T}_h),\label{lem6:3}
\end{align}
and
\begin{align}
\eta_{as,K,1}\leq&\|\textbf{q}-\textbf{q}_h\|_{0,K}+\|\nabla(z-z_h)\|_{0,K}\quad K\in\mathcal{T}_h,\label{lem6:4}\\
\eta_{as,K,2}\lesssim&osc(y_d,K)+\|\textbf{q}-\textbf{q}_h\|_{0,K}+\|z-z_h\|_{0,K}\nonumber\\
&+\|y-y_h\|_{0,K}\quad K\in\mathcal{T}_h,\label{lem6:5}\\
\sum_{K\in\mathcal{T}_h}\eta_{as,\partial K}^2\lesssim&\|u-u_h\|_{0,\partial\Omega}^2+\|\textbf{q}-\textbf{q}_h\|_{0,\Omega}^2+\|z-z_h\|_{1,\Omega}^2\nonumber\\
&+\|y-y_h\|_{0,\Omega}^2+osc^2(y_d,\mathcal{T}_h).\label{lem6:6}
\end{align}
\end{theorem}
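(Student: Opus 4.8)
The plan is to establish the six efficiency bounds by the standard residual-type technique: for the volume/flux estimators I use the fact that the auxiliary problem \eqref{auxi} reproduces the exact mixed-form identities, so the HDG residuals measure precisely the discretization error against $(\textbf{p},y)$ and $(\textbf{q},z)$; for the element-interior estimators I invoke bubble functions \eqref{bubble}; and for the jump estimators $\eta_{s,\partial K}$ and $\eta_{as,\partial K}$ I pass through the auxiliary intermediaries and then use the triangle inequality together with Lemma \ref{lem3} to replace $y(u_h)$, $z(u_h)$ etc. by the true solution.

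First I would prove \eqref{lem6:1} and \eqref{lem6:4}. Because $\textbf{p}=-\nabla y$ and $\textbf{p}_h$ approximates it, add and subtract $\nabla y$: $\textbf{p}_h+\nabla y_h=(\textbf{p}_h-\textbf{p})+\nabla(y_h-y)$, and the triangle inequality gives $\eta_{s,K,1}\le\|\textbf{p}-\textbf{p}_h\|_{0,K}+\|\nabla(y-y_h)\|_{0,K}$ immediately; \eqref{lem6:4} is identical with $(\textbf{q},z)$. Next, for \eqref{lem6:2} I would write $f-\nabla\cdot\textbf{p}_h-y_h$ and use the exact relation $\nabla\cdot\textbf{p}+y=f$ so that the residual equals $\nabla\cdot(\textbf{p}-\textbf{p}_h)+(y-y_h)$. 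To control the divergence term without assuming $H(div)$-regularity of the discrete flux, I would test against the bubble-weighted projection $p_K E_K$ with $p_K=\Pi_k^o(f)-\nabla\cdot\textbf{p}_h-y_h$, integrate $(\nabla\cdot(\textbf{p}-\textbf{p}_h),p_K E_K)_K$ by parts to move the derivative onto $p_K E_K$ (the boundary term vanishes since $E_K$ vanishes on $\partial K$), and apply the inverse-type estimates \eqref{bubble:1}--\eqref{bubble:3}; the data oscillation $osc(f,K)$ absorbs the difference between $f$ and $\Pi_k^o f$. The same scheme yields \eqref{lem6:5}, where the extra term $\nabla\cdot\textbf{q}+z=y-y_d$ introduces the $\|y-y_h\|_{0,K}$ contribution.

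The hard part will be the face estimators \eqref{lem6:3} and \eqref{lem6:6}, which mix the numerical-trace error $y_h-\widehat{y}_h$ with the control error. I would first split $y_h-\widehat{y}_h=(y_h-y(u_h))+(y(u_h)-\widehat{y}_h)$ and recall that on $\partial\Omega$ the trace $\widehat{y}_h$ relates to the flux boundary condition \eqref{HDG:4}, so that $y(u_h)-\widehat{y}_h$ on boundary faces is tied to $u_h+g$ and hence to the control error $\|u-u_h\|_{0,\partial\Omega}$ via \eqref{auxi:3}. On interior faces, the exact $y$ is single-valued, so $\widehat{y}_h$ should be compared to $y$, and the jump is bounded using the numerical-flux definition $\widehat{\textbf{p}}_h\cdot\textbf{n}=\textbf{p}_h\cdot\textbf{n}+\tau_1(y_h-\widehat{y}_h)$ together with Lemma \ref{lem2} applied to $y$; with $\tau_1=h_K^{-1}$ the weight $h_K^{-1/2}$ in $\eta_{s,\partial K}$ comes out correctly. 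Summing over all faces and applying the trace theorem and Lemma \ref{lem1} then collects $\|\textbf{p}-\textbf{p}_h\|_{0,\Omega}$, $\|y-y_h\|_{1,\Omega}$, the control error, and $osc(f,\mathcal{T}_h)$, giving \eqref{lem6:3}; the adjoint version \eqref{lem6:6} proceeds identically but with the homogeneous Neumann condition \eqref{auxi:6}, so the control error enters only through the source $y(u_h)-y_d$, accounting for the additional $\|y-y_h\|_{0,\Omega}^2$ term.

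Throughout I would keep the auxiliary quantities $\textbf{p}(u_h),y(u_h),\textbf{q}(u_h),z(u_h)$ as bookkeeping intermediaries and eliminate them at the end using Lemma \ref{lem3}, which bounds $\|y-y(u_h)\|_{0,\Omega}$, $\|z-z(u_h)\|$, and $\|u-u_h\|_{0,\partial\Omega}$ in terms of the computable data, so that the final right-hand sides involve only the genuine errors $\|\textbf{p}-\textbf{p}_h\|$, $\|y-y_h\|_{1,\Omega}$, $\|\textbf{q}-\textbf{q}_h\|$, $\|z-z_h\|_{1,\Omega}$, $\|u-u_h\|_{0,\partial\Omega}$ and the oscillations.
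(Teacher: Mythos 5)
Your treatment of (\ref{lem6:1}) and (\ref{lem6:4}) (triangle inequality using $\textbf{p}=-\nabla y$, $\textbf{q}=-\nabla z$) and of (\ref{lem6:2}) and (\ref{lem6:5}) (split off the oscillation, set $p_K=\Pi_k^of-\nabla\cdot\textbf{p}_h-y_h$, test with $E_Kp_K$, integrate by parts using $E_K|_{\partial K}=0$, and apply (\ref{bubble:1})--(\ref{bubble:3}) with Young's inequality) is correct and coincides with the paper's argument; you even make explicit the integration-by-parts step that the paper leaves implicit.

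The face bounds (\ref{lem6:3}) and (\ref{lem6:6}) are where your proposal has a genuine gap. The splitting $y_h-\widehat{y}_h=(y_h-y(u_h))+(y(u_h)-\widehat{y}_h)$ followed by face-wise trace estimates cannot close: because $\eta_{s,\partial K}$ carries the weight $h_K^{-1/2}$, the scaled trace inequality gives $h_K^{-1/2}\|y_h-y(u_h)\|_{0,\partial K}\lesssim h_K^{-1}\|y_h-y(u_h)\|_{0,K}+\|\nabla(y_h-y(u_h))\|_{0,K}$, and the term $h_K^{-1}\|y_h-y(u_h)\|_{0,K}$ is not dominated by the $h$-independent error norms on the right of (\ref{lem6:3}); this loss of a power of $h$ is exactly why jump-type estimators are the delicate part of efficiency proofs. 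Your interior-face argument fares no better: the weak flux continuity (\ref{HDG:3}) controls only the \emph{two-sided sum} of $\tau_1(y_h-\widehat{y}_h)$ over the elements sharing a face (against the jump of $\textbf{p}_h\cdot\textbf{n}$), not each one-sided difference, and Lemma \ref{lem2} is an upper bound for the pairing $\langle\nabla v\cdot\textbf{n}_F,w_h\rangle_F$, which cannot yield a bound on $\|y_h-\widehat{y}_h\|_{0,F}$ itself. The paper's mechanism is the idea you are missing: since $\mathcal{B}(\textbf{r},w,\mu;\textbf{r},w,\mu;\tau)=\|\textbf{r}\|_{0,\Omega}^2+\|w\|_{0,\Omega}^2+\langle\tau(w-\mu),w-\mu\rangle_{\partial\mathcal{T}_h}$, the quantity $\sum_K\eta_{s,\partial K}^2$ (for $\tau_1=h_K^{-1}$) appears on the left-hand side of the energy identity already established in the proof of Lemma \ref{lem5}, whose right-hand side was bounded there by products of the local estimators with the auxiliary errors; Young's inequality then gives $\sum_K\eta_{s,\partial K}^2\lesssim\|\textbf{p}(u_h)-\textbf{p}_h\|_{0,\Omega}^2+\|y(u_h)-y_h\|_{1,\Omega}^2+\sum_K\big(\eta_{s,K,1}^2+\eta_{s,K,2}^2\big)$, after which the stability bound (\ref{lem3-proof:2}), the already-proved (\ref{lem6:1})--(\ref{lem6:2}) and the triangle inequality produce (\ref{lem6:3}), and (\ref{lem6:6}) follows in the same way. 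Note also that for this last elimination you need precisely (\ref{lem3-proof:2}) (auxiliary-minus-exact controlled by $\|u-u_h\|_{0,\partial\Omega}$), not the statement of Lemma \ref{lem3} as you invoke it: that statement bounds the errors by estimator terms, which in an efficiency proof runs in the wrong direction and would be circular. (An alternative, fully local fix does exist: taking in (\ref{HDG:1}) a test function $\textbf{r}_1\in(\mathcal{P}^k(K))^d$ that lifts the normal trace $y_h-\widehat{y}_h$ from $\partial K$ with $\|\textbf{r}_1\|_{0,K}\lesssim h_K^{1/2}\|y_h-\widehat{y}_h\|_{0,\partial K}$ yields $\eta_{s,\partial K}\lesssim\eta_{s,K,1}$; but such a lifting construction is likewise absent from your proposal.)
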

\begin{proof}
Obviously, the inequalities (\ref{lem6:1}) and (\ref{lem6:4}) can be obtained directly by the triangle inequality. According to
the definition of $\eta_{s,K,2}$ and the triangle inequality we know that
\begin{equation*}
\eta_{s,K,2}\leq osc(f,K)+h_K\|\Pi_k^of-\nabla\cdot\textbf{p}_h-y_h\|_{0,K}.
\end{equation*}
Setting $p_K=\Pi_k^of-\nabla\cdot\textbf{p}_h-y_h$ we obtain
\begin{align*}
h_K^2\|p_K\|_{0,K}^2\lesssim& h_K^2(\Pi_k^of-f+f-\nabla\cdot\textbf{p}_h-y_h,E_Kp_K)_K\\
=&h_K^2(\Pi_k^of-f,E_Kp_K)_K+h_K^2(\nabla\cdot(\textbf{p}-\textbf{p}_h)+y-y_h,E_Kp_K)_K,
\end{align*}
from (\ref{bubble:1}). Then we can obtain the error estimate (\ref{lem6:2}) by using (\ref{bubble}), Young's inequality and the above two inequalities.
And the approximation result (\ref{lem6:5}) can be proved similarly. Now we turn to prove the error estimate (\ref{lem6:3}). From the proof of Lemma
\ref{lem5}, we have
\begin{align*}
\sum_{K\in\mathcal{T}_h}\eta_{s,\partial K}^2\lesssim&\|\textbf{p}(u_h)-\textbf{p}_h\|_{0,\Omega}^2+\|y(u_h)-y_h\|_{1,\Omega}^2\\
&+\sum_{K\in\mathcal{T}_h}\{\eta_{s,K,1}^2+\eta_{s,K,2}^2\}.
\end{align*}
Then by using (\ref{lem3-proof:2}), (\ref{lem6:1}), (\ref{lem6:2}), the triangle inequality and the above inequality to infer that
\begin{align*}
\sum_{K\in\mathcal{T}_h}\eta_{s,\partial K}^2\lesssim\|u-u_h\|_{0,\partial\Omega}+\|\textbf{p}-\textbf{p}_h\|_{0,\Omega}^2+
\|y-y_h\|_{1,\Omega}^2+osc^2(f,\mathcal{T}_h).
\end{align*}
Therefore the approximation result (\ref{lem6:3}) is derived. And the inequality (\ref{lem6:6}) can be proved similarly.
\end{proof}

\section{Numerical experiments}
\label{sec6}
Now we provide two examples in order to examine the quality of the derived estimator.
As we know, an adaptive algorithm consists of the loops "\textbf{SOLVE}$\rightarrow$\textbf{ESTIMATE}$\rightarrow$\textbf{MARK}$\rightarrow$\textbf{REFINE}". In this section, a fix-point iteration algorithm presented in \cite{zilz2015} is used for solving the model problem. In step \textbf{REFINE}, the newest vertex bisection algorithm \cite{s2007} is employed, and the following marking strategy is used in step \textbf{MARK}
\begin{equation*}
\sum_{K\in\mathcal{M}_h}\eta_K^2\geq\theta\eta^2,
\end{equation*}
where
\begin{align*}
\eta^2=\eta_s^2&+\eta_{as}^2,\\
\eta_{K}^2=\eta_{s,K,1}^2+\eta_{s,K,2}^2+\eta_{s,\partial K}^2&+\eta_{as,K,1}^2+\eta_{as,K,2}^2+\eta_{as,\partial K}^2.
\end{align*}
Furthermore, we define
\begin{align*}
E=&\|u-u_h\|_{0,\partial\Omega}+\|\textbf{p}-\textbf{p}_h\|_{0,\Omega}+\|y-y_h\|_{1,\Omega}\\
&+\|\textbf{q}-\textbf{q}_h\|_{0,\Omega}+\|z-z_h\|_{1,\Omega}.
\end{align*}
Here, we note that the figures of convergence history are plotted in log-log coordinates.
\begin{example}
Based on the domain $\Omega=(0,1)^2$, we consider an example with $u_a=-0.1$, $u_b=0.1$
and $\alpha=1$. Let the functions $f$, $y_d$ and $g$ be such that the Neumann boundary control problem
has the following exact solutions
\begin{align*}
y=\sin(2\pi x_1)\sin(2\pi x_2),\quad z=\cos(2\pi x_1),\quad u=\Pi_{U_{ad}}\Big\{-\frac{1}{\alpha}z|_{\partial\Omega}\Big\}.
\end{align*}
\end{example}

\begin{figure}[htbp]
\begin{center}
\includegraphics[width=6.5cm,height=5.4cm,clip,trim=0cm 0cm 1cm 1cm]{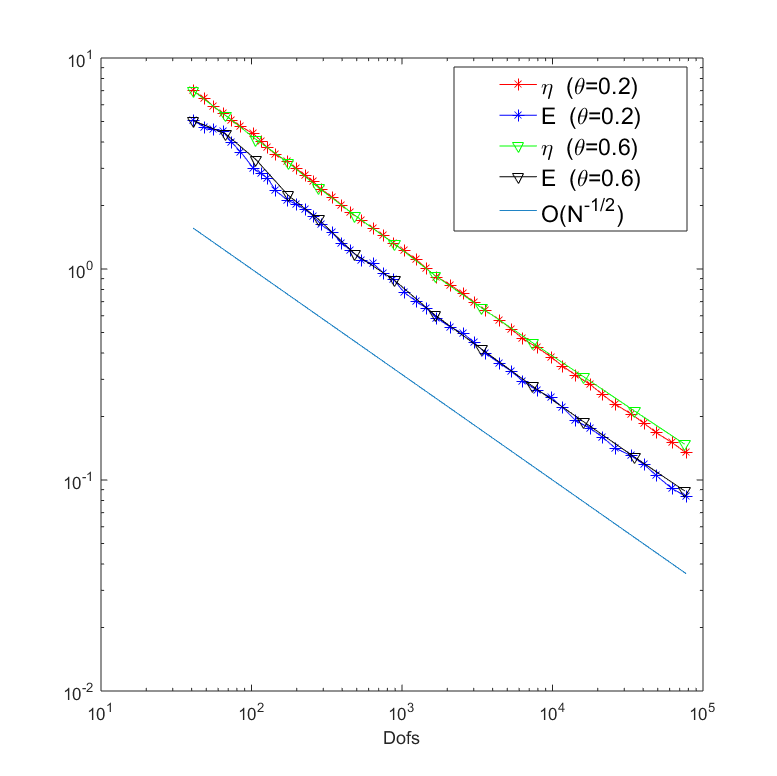}
\includegraphics[width=6.5cm,height=5.4cm,clip,trim=0cm 0cm 1cm 1cm]{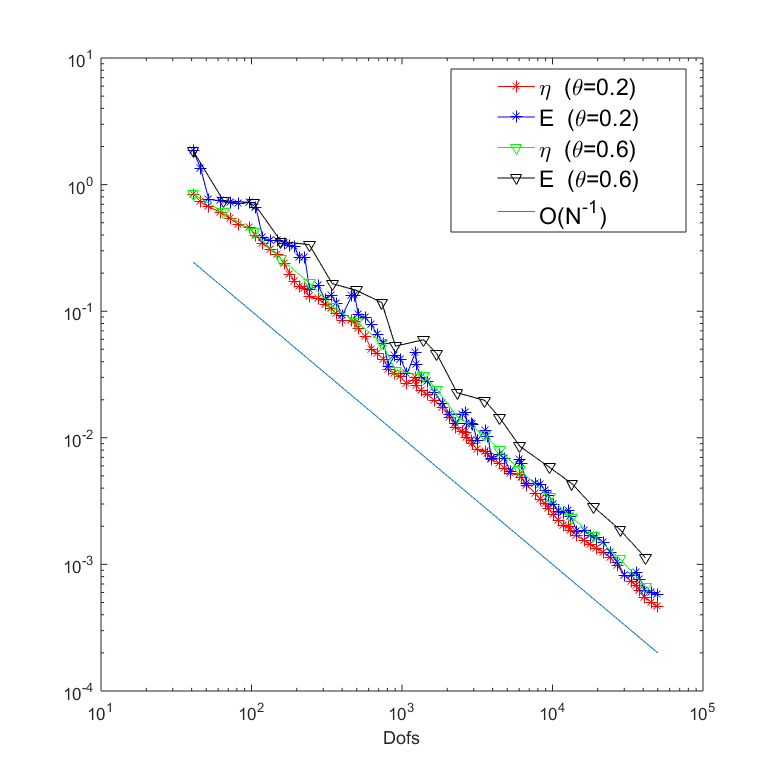}
\end{center}
\caption{\footnotesize Left: Convergence history for $k=1$. Right: Convergence history for $k=2$.}\label{ex1f1}
\end{figure}

\begin{figure}[htbp]
\begin{center}
\includegraphics[width=6.5cm,height=5.4cm,clip,trim=0cm 0cm 1cm 1cm]{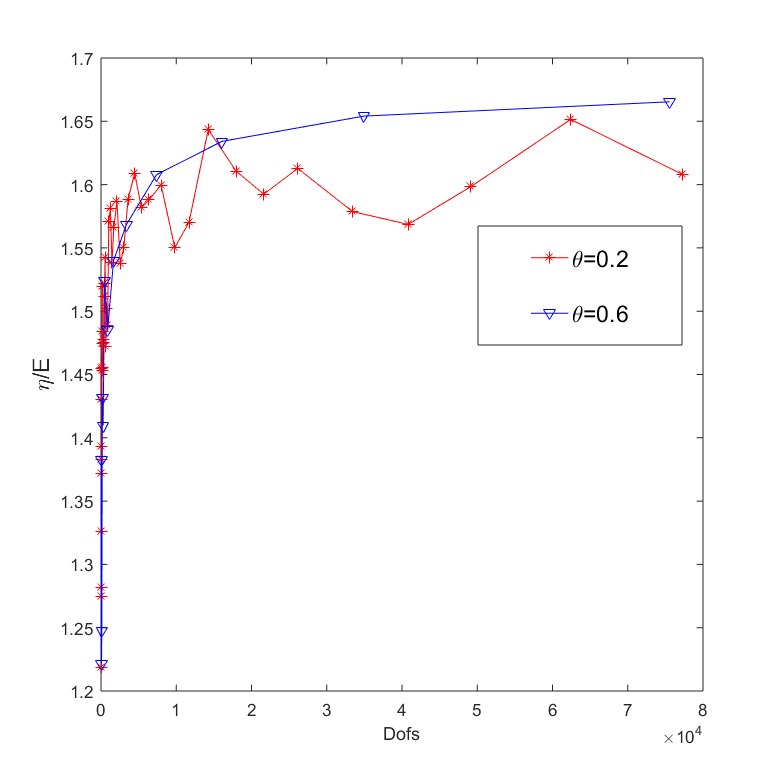}
\includegraphics[width=6.5cm,height=5.4cm,clip,trim=0cm 0cm 1cm 1cm]{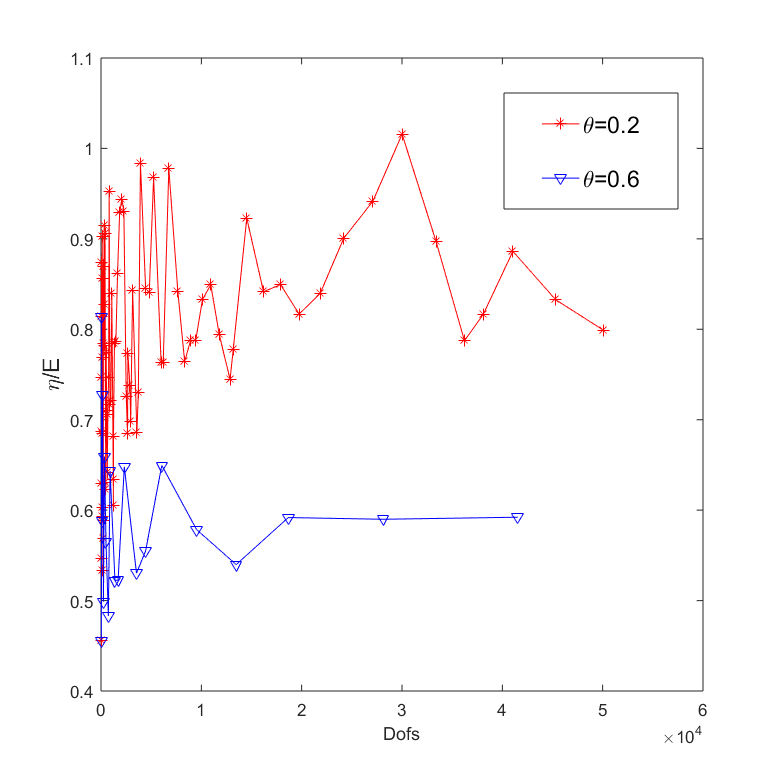}
\end{center}
\caption{\footnotesize Left: The effectiveness index for $k=1$. Right: The effectiveness index for $k=2$.}\label{ex1f2}
\end{figure}

\begin{figure}[htbp]
\begin{center}
\includegraphics[width=6.5cm,height=5.4cm,clip,trim=0cm 0cm 1cm 1cm]{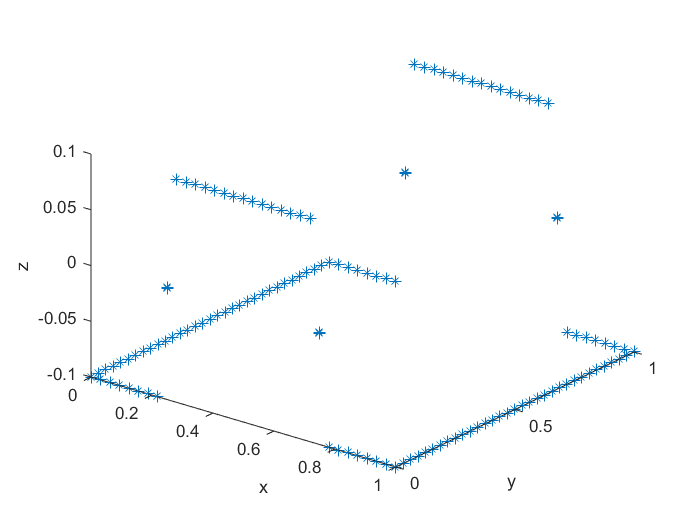}
\includegraphics[width=6.5cm,height=5.4cm,clip,trim=0cm 0cm 1cm 1cm]{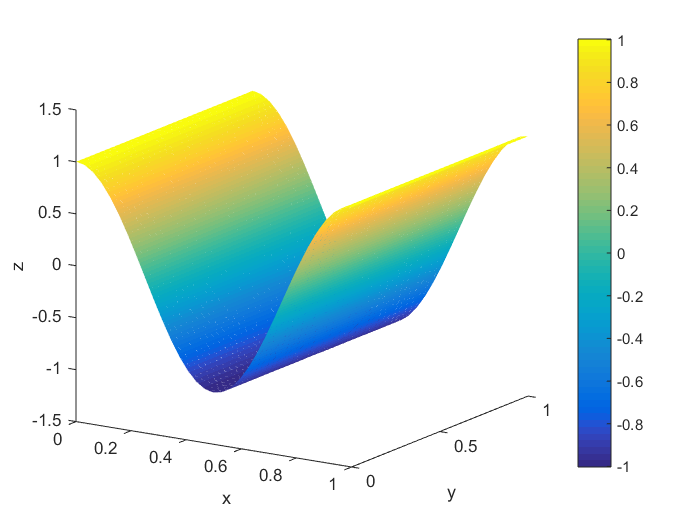}
\end{center}
\caption{\footnotesize Left: The profile of the numerical control for $k=1$. Right: The profile of the numerical adjoint state for $k=1$.}\label{ex1f3}
\end{figure}

We test the example for $k=1$ and $k=2$. From the convergence history in Figure \ref{ex1f1} for $\theta=0.2$ and $\theta=0.6$,
we find that the error $E$ is equivalent to the estimator $\eta$ and the error $E$ and the estimator $\eta$ can achieve the optimal
convergence order by adaptive refinement. Furthermore, the effectiveness index is presented in Figure \ref{ex1f2}, which indicates
the obtained a posteriori error estimator is very efficient. Finally, the profiles of the numerical control and adjoint state are
shown in Figure \ref{ex1f3}.

\begin{example}
We consider an example with a boundary term $\int_{\partial\Omega}yg_1dx$ in the objective functional. Then the adjoint problem possesses
the Nuemann boundary condition $\nabla z\cdot\textbf{n}=g_1$.
Here the designed domain is given by $\Omega=(-1,1)^2\backslash([0,1]\\
\times(-1,0])$. The control constraints and the regularization parameter are set as $u_a=-0.2$, $u_b=0.2$ and $\alpha=1$. Furthermore let the functions $f$, $y_d$ and $g$ be such that
the Neumann boundary control problem has the following exact solutions
\begin{align*}
y(r,\theta_1)&=0.\\
z(r,\theta_1)&=r^{2/3}\cos\Big(\frac{2}{3}\theta_1\Big),\\
u(r,\theta_1)&=\Pi_{U_{ad}}\Big(-r^{2/3}\cos\Big(\frac{2}{3}\theta_1\Big)\Big),
\end{align*}
where $r=\sqrt{x_1^2+x_2^2}$, $\theta_1=\arccos(r^{-1}x\cdot e_1)$ and $e_1=[1,0]^{T}$.
\end{example}

\begin{figure}[htbp]
\begin{center}
\includegraphics[width=6.5cm,height=5.0cm,clip,trim=1cm 0cm 1cm 1cm]{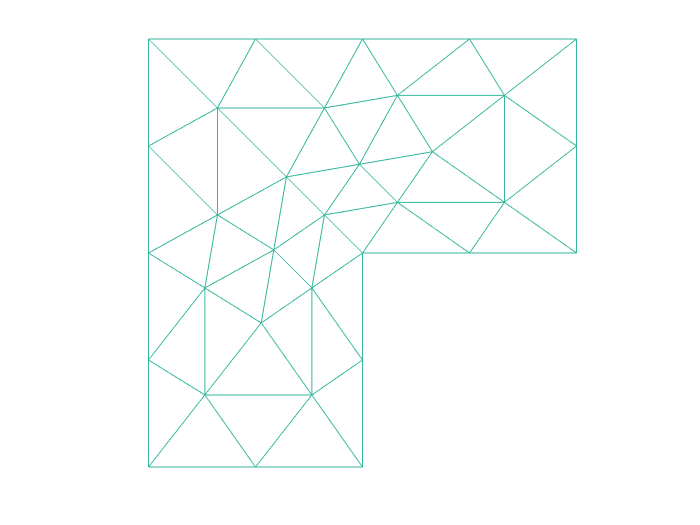}
\includegraphics[width=6.5cm,height=5.0cm,clip,trim=1cm 0cm 1cm 1cm]{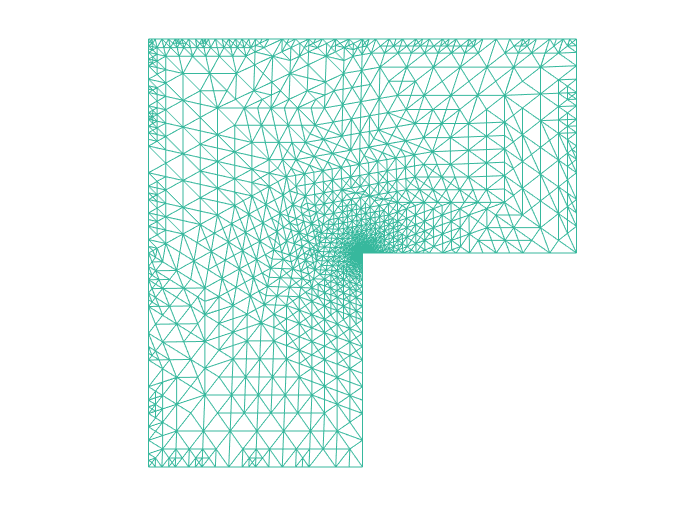}
\includegraphics[width=6.5cm,height=5.0cm,clip,trim=0cm 0cm 1cm 1cm]{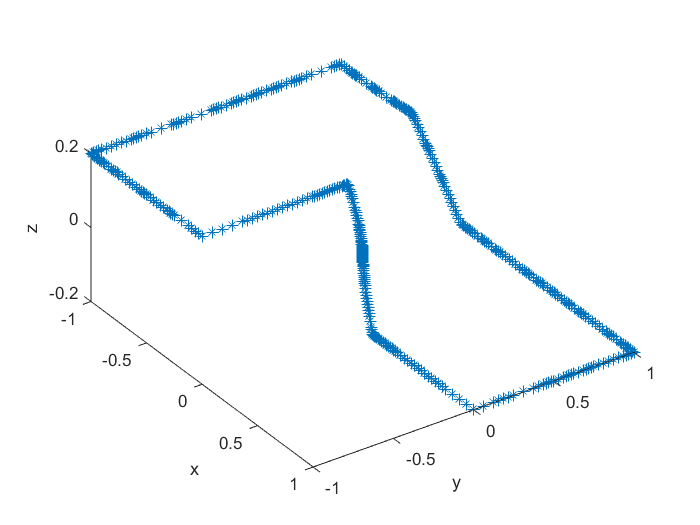}
\includegraphics[width=6.5cm,height=5.0cm,clip,trim=0cm 0cm 1cm 1cm]{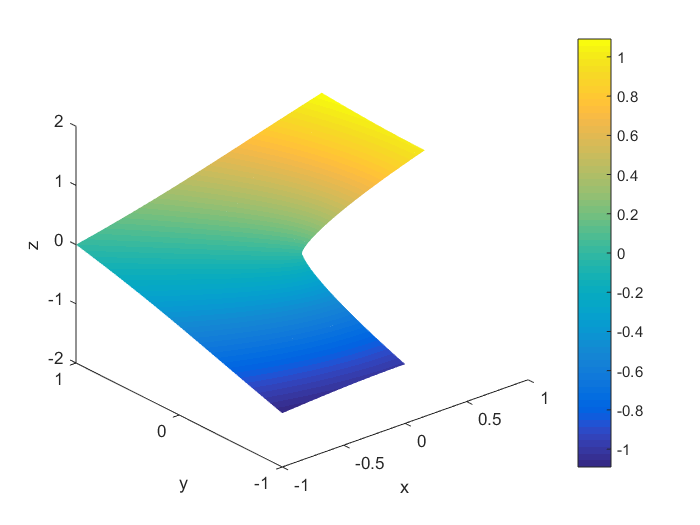}
\end{center}
\caption{\footnotesize The profiles of the initial mesh (Top-Left), the adaptive mesh (Top-Right), the numerical control (Bottom-Left)
and the numerical adjoint state (Bottom-Right) for $k=2$ and $\theta=0.4$.}\label{ex2f1}
\end{figure}

\begin{figure}[htbp]
\begin{center}
\includegraphics[width=6.5cm,height=5.4cm,clip,trim=0cm 0cm 1cm 1cm]{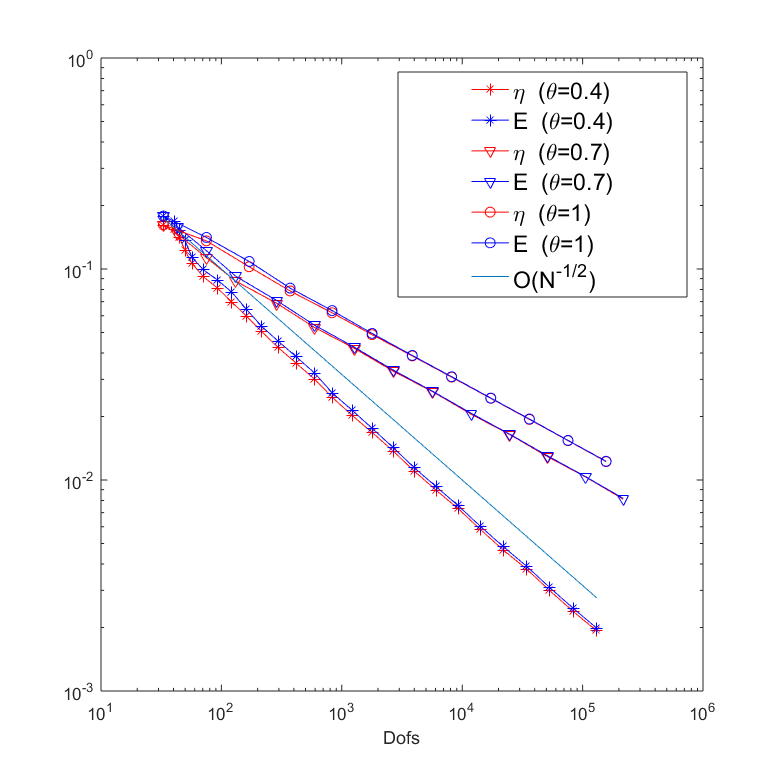}
\includegraphics[width=6.5cm,height=5.4cm,clip,trim=0cm 0cm 1cm 1cm]{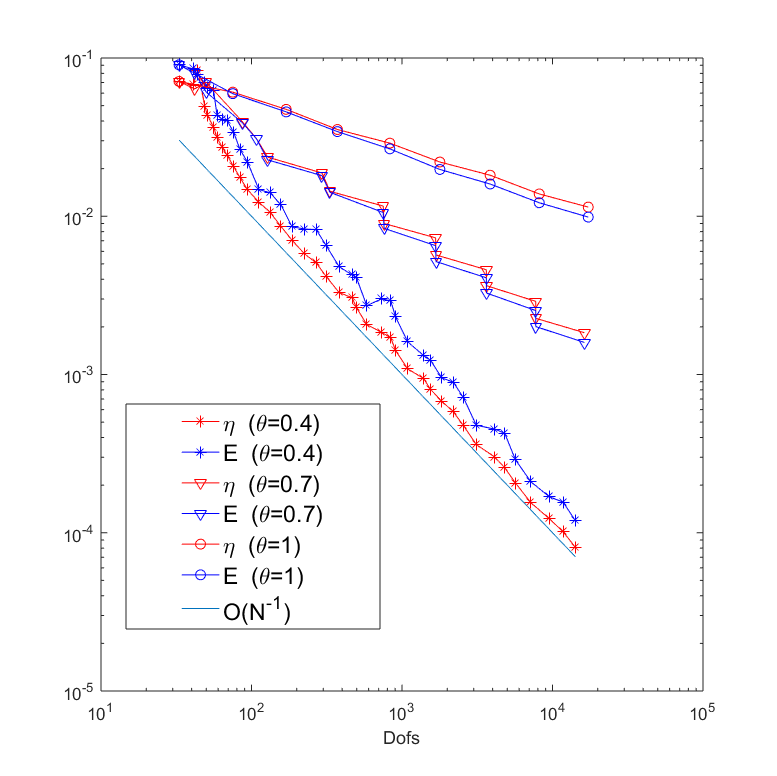}
\end{center}
\caption{\footnotesize The convergence history for $k=1$ (Left) and $k=2$ (Right).}\label{ex2f2}
\end{figure}

\begin{figure}[htbp]
\begin{center}
\includegraphics[width=6.5cm,height=5.4cm,clip,trim=0cm 0cm 1cm 1cm]{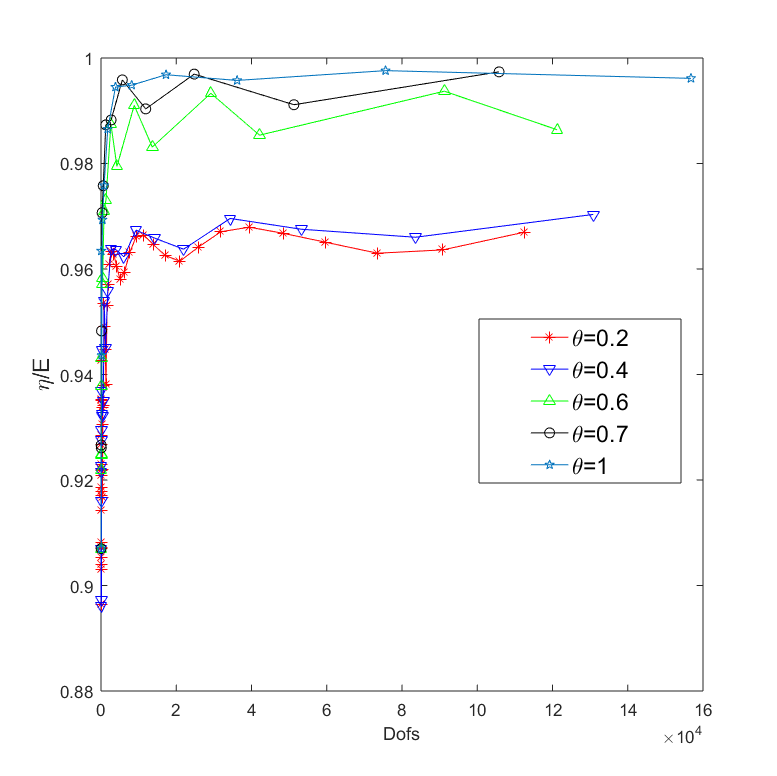}
\includegraphics[width=6.5cm,height=5.4cm,clip,trim=0cm 0cm 1cm 1cm]{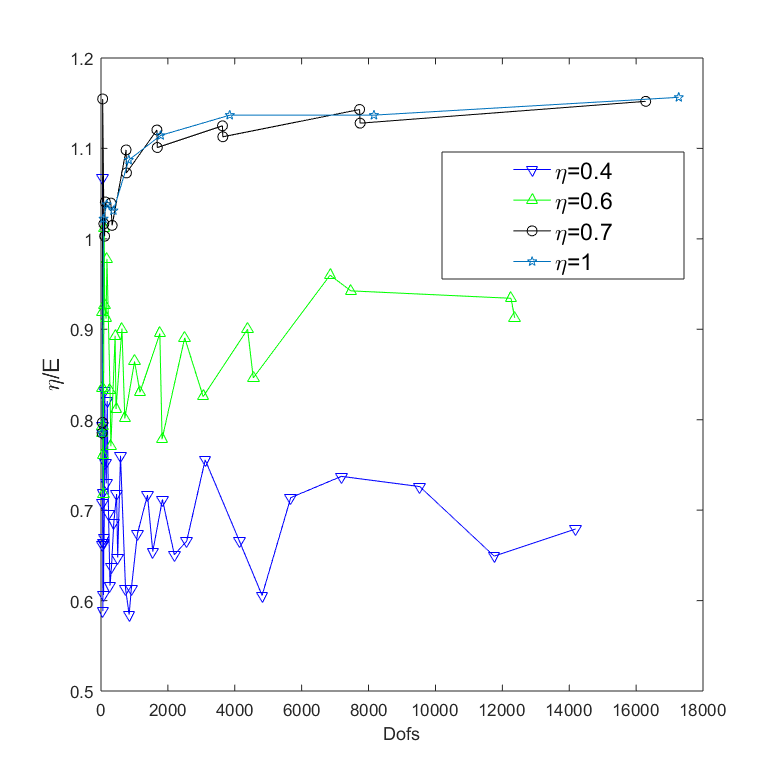}
\end{center}
\caption{\footnotesize The effectiveness index for $k=1$ (Left) and $k=2$ (Right).}\label{ex2f3}
\end{figure}

The adjoint exhibits a typical singularity at the reentrant corner of the domain $\Omega$. In Figure \ref{ex2f1}, we show
the profiles of the initial mesh, the adaptive mesh, the numerical control and the numerical adjoint state for $k=2$ and $\theta=0.4$.
We can find that the mesh nodes are concentrated around the reentrant corner where the singularity is induced.
Hence the obtained a posteriori error estimator can grab efficiently the singularity of the problem. In Figure \ref{ex2f2},
the convergence history for $k=1$ and $k=2$ is presented, which indicates that the estimator $\eta$ is equivalent to the error $E$ and
the estimator $\eta$ and the error $E$ can achieve the optimal convergence order while $\theta$ is less than a certain value.
In Figure \ref{ex2f3}, the effectiveness index for $k=1$ and $k=2$ are provided. We can find that the effectiveness index for $k=1$
is between 0.96 and 1 and the effectiveness index for $k=2$ is between 0.6 and 1.2, which means the obtained a posteriori error estimator
is very efficient.
\section{Conclusions}
\label{sec7}
In this paper, a Neumann boundary optimal control problem is considered. We use the hybridizable discontinuous Galerkin method
as the discretization technique, and the flux variables, the scalar variables and the boundary trace variables are approximated
by polynomials of degree $k$. Then an efficient and reliable a posteriori error estimator without any postprocessing solutions is obtained for the errors. Finally,
two numerical experiments are provided to verify the performance of the obtained a posteriori error estimator.

This work is just the first step for a posteriori error analysis of HDG methods for boundary control problems. Next we
extend the method and the result to the more complicated situations for instance the Dirichlet boundary control problem and
the Stokes optimal control problem.


\end{document}